\setlist[enumerate]{label= (\arabic*)}
\DeclareFontFamily{U}{wncy}{}
    \DeclareFontShape{U}{wncy}{m}{n}{<->wncyr10}{}
    \DeclareSymbolFont{mcy}{U}{wncy}{m}{n}
    \DeclareMathSymbol{\Sh}{\mathord}{mcy}{"58}
\patchcmd{\section}{\normalfont}{\normalfont\large}{}{}
    \LetLtxMacro\refa{\ref}%
    \DeclareRobustCommand{\ref}[2][]{(\refa#1{#2})}%
\DeclareTextFontCommand{\new}{\color{black}\em}
\DeclareRobustCommand{\[}{\begin{equation}}%
\DeclareRobustCommand{\]}{\end{equation}}%
\newtheoremstyle{mytheorem}
  {\topsep}   
  {\topsep}   
  {\itshape}  
  {0pt}       
  {\bfseries\color{newcol}} 
  {\color{newcol}{.}}         
  {5pt plus 1pt minus 1pt} 
  {}          
\theoremstyle{mytheorem}
\newtheorem{theorem}{Theorem}[section]
\newtheorem{cor}{Corollary}[section]
\newtheorem{lemma}{Lemma}[section]
\newcommand{\antishriek}{\text{\raisebox{\depth}{\textexclamdown}}}
\newcommand{\hAss}{\mathsf{Ass}_\infty}
\newcommand{\hLie}{\mathsf{Lie}_\infty}
\newcommand{\hPois}{\mathsf{Pois}_\infty}
\newcommand{\Lie}{\mathsf{Lie}}
\newcommand{\Ass}{\mathsf{Ass}}
\newcommand{\Com}{\mathsf{Com}}
\newcommand{\Pois}{\mathsf{Pois}}
\newcommand{\gr}{\mathsf{gr}}
\newcommand{\g}{\mathfrak{g}}
\DeclareFontFamily{U}{wncy}{}
    \DeclareFontShape{U}{wncy}{m}{n}{<->wncyr10}{}
    \DeclareSymbolFont{mcy}{U}{wncy}{m}{n}
    \DeclareMathSymbol{\Sh}{\mathord}{mcy}{"78} 
\newenvironment{titemize}{
\begin{itemize}
  \setlength{\itemsep}{0pt}
  \setlength{\parskip}{0pt}
}{\end{itemize}}
\theoremstyle{mytheorem}
\newtheorem*{theorem*}{Theorem}
\newtheorem*{lemma*}{Lemma}
\newtheorem*{corollary*}{Corollary}
\newtheorem*{conjecture*}{Conjecture}
\newcommand{\imor}{\interleave\kern-.45em\longrightarrow}
\newcommand{\Der}{\operatorname{Der}}
\newenvironment{tenumerate}{
\begin{enumerate}
  \setlength{\itemsep}{0pt}
  \setlength{\parskip}{0pt}
}{\end{enumerate}}
\newcommand{\Mod}{\mathsf{Mod}}
\newcommand{\Alg}{\mathsf{Alg}}
\newcommand{\Cog}{\mathsf{Cog}}
\newcommand{\?}{\,?\,}
\newcommand{\NN}{\mathbb N}
\newcommand{\kk}{\Bbbk}
\newcommand{\Tor}{\operatorname{Tor}}
\newcommand{\chLie}{\mathsf{ho}(\mathsf{dg Lie})}
\newcommand{\chAss}{\mathsf{ho}(\mathsf{dg Alg})}
\newcommand{\Ch}{\mathsf{Ch}}
\newcommand{\ttt}{\texttt{-}}
\newcommand{\TwAs}{\mathsf{Tw}\ttt\mathsf{As}}
\DeclareTextFontCommand{\new}{\color{newcol}\bf\em} 
\newcommand\claim[2][.8]{%
  \begin{minipage}{#1\displaywidth}%
  \itshape
  #2
  \end{minipage}%
}
\newcommand{\Addresses}{{
  \bigskip
  \footnotesize
  \textsc{School of Mathematics, Trinity College Dublin}, College Green, Dublin 2, Ireland, D02 PN40\par\nopagebreak
  \textit{E-mail address:} \texttt{pedro@math.tcd.ie}
  \bigskip
  
\textsc{
Department of Mathematics, National Research University Higher School of Economics}, 
20 Myasnitskaya street, Moscow, Russia, 101000
and \textsc{Institute for Theoretical and Experimental Physics}, Bolshaya Cheremushkinskaya 25, Moscow, Russia, 117259\par\nopagebreak
  \textit{E-mail address:}
  \texttt{akhoroshkin@hse.ru}  }}
\definecolor{seccol}{rgb}{0.00, 0.20, 0.40}
\definecolor{defcol}{rgb}{0.00, 0.20, 0.40}
\definecolor{ssecol}{rgb}{0.00, 0.20, 0.40}
\definecolor{newcol}{rgb}{0.00, 0.20, 0.40} 
\definecolor{chacol}{rgb}{0.00, 0.20, 0.40}
\title{\textsc{Derived Poincaré--Birkhoff--Witt 
theorems}\\ \tiny\textcolor{white}{space}\\
\small\textit{with an appendix by Vladimir Dotsenko}
}
\author{
\textsc{Anton Khoroshkin and Pedro Tamaroff}
}
\date{}
\begin{document}
\maketitle

\begin{abstract}

We propose a new general formalism that 
allows us to study Poincaré--Birkhoff--Witt type phenomena
for universal enveloping algebras in the 
differential graded context. Using it,  
we prove a homotopy invariant version 
of the classical Poincaré--Birkhoff--Witt 
theorem for universal envelopes
of Lie algebras. In particular, our results imply 
that all the previously known constructions of universal
envelopes of $L_\infty$-algebras (due to Baranovsky, Lada and Markl, and Moreno-Fernández) represent
the same object of the homotopy category of 
differential graded associative algebras. We also extend Quillen's classical quasi-isomorphism $\mathcal C \longrightarrow BU$
from differential graded Lie algebras to $L_\infty$-algebras; this 
confirms a conjecture of Moreno-Fernández. 
\end{abstract}


\tableofcontents

\pagebreak

\section*{Introduction}\label{sec:intro}
\addcontentsline{toc}{section}{\nameref{sec:intro}}

It is a classical result going back
to Poincar\'e, Birkhoff and Witt that,
over a field of characteristic
zero, the universal enveloping algebra
$U(\g)$ of a Lie algebra $\g$ is 
isomorphic, as a vector space, to the 
symmetric algebra $S(\g)$ on $\g$. One 
can, in fact, extend without changes
the definition of 
the functor $U$ to the category of dg Lie
algebras, and a consequence of the PBW
theorem is that for any dg Lie algebra we 
have a natural isomorphism $UH(\g)
\longrightarrow HU(\g)$. This result
implies that the underlying homology group of the universal envelope of a dg Lie algebra
does not depend on the Lie algebra
structure of $\g$ but only on the homology group $H(\g)$, and that 
this functor descends to the homotopy 
category of dg Lie algebras.
This category 
can be modeled through minimal
$L_\infty$-algebras and their morphisms
up to $L_\infty$-quasi-isomorphism and,
similarly, the homotopy category of dga
algebras can be modeled through minimal
$A_\infty$-algebras and their morphisms
up to $A_\infty$-quasi-isomorphism, so
it is reasonable to consider the problem
of finding a functor at the level of
$L_\infty$-algebras representing $U$ on
the homotopy category. 

This program
has been carried out by
V. Baranosvky~\cite{Bar} and later by J. Moreno-Fernandez~\cite{Jose}, 
and their results imply that to every
minimal $L_\infty$-algebra one can assign 
a ``universal enveloping'' minimal 
$A_\infty$-algebra $\Upsilon(\g)$ that enjoys many 
properties similar to those of
the classical 
universal enveloping algebra functor.
\begin{titemize}
\item its underlying 
vector space is the symmetric algebra
$S(\g)$, independently of the $L_\infty$-structure of $\g$,
\item there exists a Quillen type 
$A_\infty$-quasi-isomorphism $\Omega\mathcal{C}(\g)\longrightarrow \Upsilon(\g)$ (so the algebra $\Upsilon(\g)$ does have the correct homotopy type),
\item the canonical inclusion $\g\longrightarrow \Upsilon(\g)$ is a strict $L_\infty$-morphism: the antisymmetrized higher multiplication maps on $\Upsilon(\g)$ restrict to the higher brackets of $\g$. 
\end{titemize}

One of the
goals of this paper is to explain, 
using the methods of
V. Dotsenko and second author, introduced in~\cite{PBW} and suitably extended
to the dg setting, that the results 
of~\cite{Bar} and~\cite{Jose} both
follow from a derived version
of the classical Poincar\'e-Birkhoff-Witt theorem for Lie 
algebras. To accomplish this, we study, through the theory of operads, the Lada--Markl functor that assigns to an $A_\infty$-algebra the $L_\infty$-algebra obtained by antisymmetrising all the product operations. As in the non-dg case, this functor has a left adjoint, and we prove it satisfies
a \emph{derived Poincar\'e--Birkhoff--Witt theorem}. Our main
result implies the following; here $U(\g)$
denotes the left adjoint above, while $UH(\g)$ 
is the universal envelope
of the Lie algebra $H(\g)$: 

\begin{theorem*} For every minimal 
$L_\infty$-algebra $\g$: 
\begin{tenumerate}
\item there is an
isomorphism between the symmetric algebra
$S(\g)$ and the homology $HU(\g)$ of the $A_\infty$-universal envelope $U(\mathfrak{g})$. This isomorphism is natural with respect to
strict $L_\infty$-morphisms.
\item $S(\g)$ can be endowed with a minimal $A_\infty$-algebra structure $A_\infty$-quasi-isomorphic to $U(\g)$ so that the inclusion $\g \longrightarrow S(\g)$
is a strict map of $L_\infty$-algebras.\qed 
\end{tenumerate}
\end{theorem*}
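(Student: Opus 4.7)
The plan is to deduce both parts from a derived Poincaré--Birkhoff--Witt theorem for the operad morphism $\hLie \longrightarrow \hAss$ induced by antisymmetrisation. First, I reinterpret the universal envelope functor operadically: being the left adjoint of restriction along this morphism, $U(\g)$ is computed as the relative composite product $\hAss \circ_{\hLie} \g$, and its homotopy type is controlled by the derived composite product $\hAss \circ^{\mathbb L}_{\hLie} \g$.

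Second, I invoke the derived PBW property for the pair $(\hLie, \hAss)$: just as the classical PBW theorem admits the operadic reformulation that $\Ass$ is free as a right $\Lie$-module over the commutative symmetric sequence, one shows that $\hAss$ admits a quasi-free resolution as a right $\hLie$-module whose underlying symmetric sequence is that of the symmetric algebra functor. Feeding this into the derived composite product yields, for every $L_\infty$-algebra $\g$, a natural quasi-isomorphism $U(\g) \simeq S(H(\g))$ with an explicit differential. Specialising to minimal $\g$, where $H(\g) = \g$, gives $HU(\g) \cong S(\g)$ and proves (1), with naturality under strict morphisms following from the functoriality of the chosen resolution.

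For (2), I apply the homotopy transfer theorem to transfer the $A_\infty$-structure on $U(\g)$ to its homology $S(\g)$, producing a minimal $A_\infty$-structure together with an $A_\infty$-quasi-isomorphism $S(\g) \longrightarrow U(\g)$. To guarantee that the canonical inclusion $\g \longrightarrow S(\g)$ is a \emph{strict} $L_\infty$-morphism, the transfer datum must be chosen so that the embedding $\g \hookrightarrow U(\g)$ factors through the section $S(\g) \hookrightarrow U(\g)$; this is possible because $\g$ is minimal and the antisymmetrised operations on $U(\g)$ already restrict on the generators precisely to the given $L_\infty$-brackets of $\g$.

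The main obstacle is the second step: constructing the quasi-free right $\hLie$-module resolution of $\hAss$ with the correct generating sequence, and checking that it is cofibrant enough that the ordinary composite product against it computes the derived one. This is the content of the operadic machinery developed in the body of the paper, extending the non-dg framework of Dotsenko--Tamaroff to the $\infty$-enhancements. Once that is in place, the remaining steps---extracting the homology computation and running the homotopy transfer theorem with a contraction compatible with $\g \hookrightarrow S(\g)$---are essentially formal.
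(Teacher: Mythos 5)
Your overall strategy---reduce both claims to a derived PBW statement for the antisymmetrisation morphism $\hLie\longrightarrow\hAss$ and then run the homotopy transfer theorem---is the same as the paper's. However, there is a genuine gap at the central step, which you flag as ``the main obstacle'' but then assert rather than prove: you claim that $\hAss$ admits a quasi-free resolution as a right $\hLie$-module generated by the underlying symmetric sequence of $\Com$, and that the ordinary relative composite product against it computes the derived one. Neither claim is established, and the second needs real care: $-\circ_{\hLie}\g$ is a coequalizer and does not preserve quasi-isomorphisms without flatness or cofibrancy hypotheses. The paper does not construct such a resolution at all; it establishes a weaker but sufficient property (\emph{almost-freeness}) by a concrete chain of arguments that is entirely absent from your sketch: the Loday--Livernet presentation of $\Ass$ carries a weight grading (bracket in weight $0$, product in weight $1$) whose associated graded relations are those of $\Pois$; a homological perturbation argument (the Appendix) exhibits $\hAss$ as $\hPois$ with a weight-lowering perturbation of the differential, so the weight filtration on $\hAss$ is a filtration of right $\hLie$-modules with associated graded $\hPois$; and a distributive-law argument produces a chain homotopy equivalence of right $\hLie$-modules $\hPois\longrightarrow\Com\circ\hLie$. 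The comparison $(Hf)_!\circ H\longrightarrow H\circ f_!$ is then extracted from the spectral sequence of this filtration (Theorem 1.1), with no cofibrant replacement of $\hAss$ and no derived composite product needed. Without some version of this---in particular without the appearance of the Poisson operad as the associated graded---your second step does not go through, and with it falls the identification $HU(\g)\cong UH(\g)\cong S(\g)$ of part (1).

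Your treatment of part (2) matches the paper's: homotopy transfer from $U(\g)$ to its homology $S(\g)$, together with the observation that the unit $\g\longrightarrow U(\g)^\circ$ is a strict $L_\infty$-morphism so that the transfer datum can be chosen compatibly. That part is fine at the level of detail given, but it is conditional on part (1) and hence on the missing argument above.
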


\noindent From this theorem, we deduce the three
proposed models for universal envelopes
of $L_\infty$-algebras are, up to homotopy, one and the same. This means,
in particular, that the two existing
 tentative
models for universal envelopes are
in fact models. This follows from the
following:

\begin{corollary*}
Let $\g$ be a minimal $L_\infty$-algebra.
Every minimal $A_\infty$-algebra structure
on $S(\g)$ for which the restriction
of the antisymmetrized
structure operations coincide with the 
operations of $\g$ is 
$A_\infty$-quasi-isomorphic to
$U(\g)$. Thus, the models
of Baranovsky
and Moreno-Fernandez are $A_\infty$-isomorphic, and $A_\infty$-quasi-isomorphic to $U(\g)$.\qed
\end{corollary*}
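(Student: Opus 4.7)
The plan is to reduce the statement to the universal property of $U$ together with part (1) of the main theorem. Let $A$ denote the minimal $A_\infty$-algebra structure on $S(\g)$ provided by the hypothesis. The assumption that the antisymmetrization of the structure operations on $A$ restricts to the $L_\infty$-operations of $\g$ along the canonical inclusion $\iota\colon \g\hookrightarrow S(\g)$ says exactly that $\iota$ is a strict $L_\infty$-morphism from $\g$ to the $L_\infty$-algebra underlying $A$ under the Lada--Markl functor. Since $U$ is left adjoint to this functor, the morphism $\iota$ corresponds to a unique strict morphism of $A_\infty$-algebras
\[
\varphi\colon U(\g) \longrightarrow A.
\]

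Next I would show that $\varphi$ is an $A_\infty$-quasi-isomorphism. Because $A$ is minimal we have $H(A)=S(\g)$, while part (1) of the main theorem gives a canonical isomorphism $HU(\g)\cong S(\g)$ which is natural with respect to strict $L_\infty$-morphisms. The map $\varphi$ was obtained by adjunction from $\iota$, so its underlying map of generators is the identity of $\g$; the naturality statement in part (1) then implies that under the two identifications of $HU(\g)$ and $H(A)$ with $S(\g)$ the induced map $H(\varphi)$ is the identity of $S(\g)$. In particular $\varphi$ is a quasi-isomorphism.

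Finally, since both $U(\g)$ and $A$ are minimal $A_\infty$-algebras, any $A_\infty$-quasi-isomorphism between them is automatically an $A_\infty$-isomorphism (by the usual obstruction-theoretic/homotopy transfer argument valid for minimal $A_\infty$-algebras). Applying this to each of the constructions of Baranovsky and Moreno-Fern\'andez---which by design furnish minimal $A_\infty$-structures on $S(\g)$ whose antisymmetrization recovers the brackets of $\g$---produces strict $A_\infty$-isomorphisms from $U(\g)$ to each model, and their composition realises the desired $A_\infty$-isomorphism between the two models.

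The step I expect to be the main obstacle is making precise the claim that $H(\varphi)$ coincides with the identity of $S(\g)$ under the two different identifications. Unwinding this requires verifying that the isomorphism $HU(\g)\cong S(\g)$ of part (1) is compatible, via naturality in strict $L_\infty$-maps, with the $U\dashv(\text{antisymmetrization})$ adjunction; equivalently, one must check that the $S(\g)$-identification is characterised (on the image of $\g$) by the unit of this adjunction, so that the relevant naturality square reduces to the identity on generators and hence, by the multiplicative structure, on all of $S(\g)$.
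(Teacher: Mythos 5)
Your strategy coincides with the paper's: the adjunction turns the strict $L_\infty$-map $\g\to A^\circ$ into a strict morphism $\varphi\colon U(\g)\to A$ of $A_\infty$-algebras, part (1) of the main theorem computes $HU(\g)$, and minimality upgrades the resulting quasi-isomorphism to an $A_\infty$-isomorphism. The step you yourself flag as the main obstacle is, however, a genuine gap as written. Naturality of the isomorphism in part (1) is naturality with respect to strict $L_\infty$-morphisms $\g\to\g'$, so it compares $HU(\g)$ with $HU(\g')$; it does not apply directly to $\varphi$, whose target $A$ is not presented as a universal envelope. Worse, the conclusion you aim for --- that $H(\varphi)$ is the \emph{identity} of $S(\g)$ --- neither follows nor is what one should expect: the hypothesis constrains only the \emph{antisymmetrization} of the operations of $A$ restricted to $\g$, so the symmetric part of $m_2$ on $\g\otimes\g$, and hence the value of $H(\varphi)$ on $S^2(\g)$ (which is precisely the symmetrization of $m_2$ in the PBW identification), is not pinned down by the data you invoke. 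The paper's proof instead uses the multiplicative form of the derived PBW theorem: $HU(\g)$ is naturally isomorphic \emph{as an associative algebra} to the classical envelope $U(\g,l_2)$, so $H(\varphi)$ is the unique algebra morphism $U(\g,l_2)\to (S(\g),m_2)$ extending the inclusion of $\g$, and it is an isomorphism because the target algebra is itself identified with $U(\g,l_2)$ --- an input supplied by Baranovsky's and Moreno-Fern\'andez's constructions. Your closing remark about ``the multiplicative structure'' points in this direction, but multiplicativity alone yields only uniqueness of $H(\varphi)$ on the subalgebra generated by $\g$, not bijectivity; the identification of $(S(\g),m_2)$ with the classical envelope is the missing ingredient.
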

\noindent For a final application,
consider an
$L_\infty$-algebra $\g$. Then $ 
\mathcal C(\g)$ is a commutative dgc coalgebra, 
where $\mathcal C$ is the
Quillen construction on $\g$. Since $U(\g)$
is an $A_\infty$-algebra, it is natural
to compare $\mathcal C(\g)$ to $BU(\g)$,
which is a
(non-commutative) dgc coalgebra. This
result was conjectured in~\cite{Jose}.

\begin{corollary*}
There is an acyclic cofibration $\mathcal C(\g) \longrightarrow BU(\g)$ of dgc coalgebras natural with respect to strict $L_\infty$-morphisms. Moreover,
for any minimal $L_\infty$-algebra $\g$, there are acyclic cofibrations
$\mathcal{C}(\g) \longrightarrow BS(\g)$. In particular, $BU(\g)$ always admits a commutative
model.  \qed
\end{corollary*}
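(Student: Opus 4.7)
My plan is to deduce this corollary from the derived Poincar\'e--Birkhoff--Witt theorem proved in the paper, which provides an $A_\infty$-quasi-isomorphism $\Omega \mathcal{C}(\g) \longrightarrow U(\g)$. By the bar-cobar adjunction between conilpotent dgc coalgebras and $A_\infty$-algebras, this Quillen-type morphism corresponds under adjunction to a morphism $\mathcal{C}(\g) \longrightarrow BU(\g)$ of dgc coalgebras, factoring as $\mathcal{C}(\g) \longrightarrow B\Omega \mathcal{C}(\g) \longrightarrow BU(\g)$. Naturality with respect to strict $L_\infty$-morphisms is inherited from the naturality of the derived PBW comparison. That this map is a weak equivalence follows from two standard facts: the bar construction takes $A_\infty$-quasi-isomorphisms to quasi-isomorphisms of dgc coalgebras, and the unit $C \longrightarrow B\Omega C$ of the bar-cobar adjunction is a quasi-isomorphism for every conilpotent dgc coalgebra $C$.

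For the cofibration claim I would work in the standard model structure on conilpotent dgc coalgebras in which cofibrations are exactly the degreewise injective maps. The underlying graded map $S^c(s\g) \longrightarrow T^c(sU(\g))$ factors as the symmetrization embedding $S^c(s\g) \hookrightarrow T^c(s\g)$ composed with the classical PBW inclusion $\g \hookrightarrow U(\g)$ extended tensor-factorwise; both steps are injective in characteristic zero. The main obstacle I anticipate is pinning down this description of the underlying linear map precisely: one must verify that the higher $A_\infty$-components of $U(\g)$ do not spoil the naive symmetrization-then-PBW description, which amounts to identifying the leading term of the adjoint to the derived PBW comparison and checking that the correction terms live in strictly smaller filtration degree.

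For the second half, when $\g$ is minimal the main theorem endows $S(\g)$ with a minimal $A_\infty$-structure together with a strict $L_\infty$-inclusion $\g \hookrightarrow S(\g)$. I would define $\mathcal{C}(\g) \longrightarrow BS(\g)$ directly by the symmetrization map applied to this strict inclusion, which is a chain map precisely because the antisymmetrized $A_\infty$-operations of $S(\g)$ restrict to the $L_\infty$-brackets of $\g$. Injectivity is immediate, and the quasi-isomorphism property follows by two-out-of-three applied to the composite $\mathcal{C}(\g) \longrightarrow BS(\g) \longrightarrow BU(\g)$, where the second map is the bar construction of a Kadeishvili-type $A_\infty$-quasi-isomorphism $S(\g) \longrightarrow U(\g)$ and the composite agrees up to homotopy with the previously constructed map $\mathcal{C}(\g) \longrightarrow BU(\g)$. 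Finally, $\mathcal{C}(\g) = S^c(s\g)$ is cocommutative and acyclic-cofibrantly equivalent to $BU(\g)$, providing the claimed commutative model.
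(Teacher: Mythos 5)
Your construction of the map and your overall strategy coincide with the paper's: the morphism $\mathcal C(\g)\longrightarrow BU(\g)$ is the adjoint of the canonical map $\Omega\mathcal C(\g)\longrightarrow U(\g)$ (equivalently, the map of coalgebras corresponding to the twisting cochain $\tau$ sending a generator $sg$ to the class of its desuspension, which is how the paper phrases it); the homological input is the derived PBW identification of $H(U(\g))$ with $S(\g)$; cofibrancy is injectivity of the underlying map in the Vallette model structure; and the second half is handled via a transferred minimal $A_\infty$-structure on $S(\g)$ and two-out-of-three, just as in the paper.

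There is, however, one genuine gap in your verification that the map is a weak equivalence. In the model structure on conilpotent dgc coalgebras that both you and the paper use, the weak equivalences are the maps $q$ such that $\Omega q$ is a quasi-isomorphism of dga algebras, and the paper explicitly warns that this class is \emph{strictly contained} in the class of quasi-isomorphisms. The two ``standard facts'' you invoke, as you state them, only show that $\mathcal C(\g)\longrightarrow BU(\g)$ is a quasi-isomorphism of complexes, which does not suffice to conclude ``acyclic cofibration.'' The facts are true in the stronger form you actually need --- the unit $C\longrightarrow B\Omega C$ and the map $Bf$ for $f$ an $A_\infty$-quasi-isomorphism are weak equivalences in the $\Omega$-created sense --- and you should cite them as such; alternatively, argue as the paper does: the quasi-isomorphism $\Omega\mathcal C(\g)\longrightarrow U(\g)$ factors as $\varepsilon_{U(\g)}\circ\Omega q$, and since $\varepsilon_{U(\g)}:\Omega BU(\g)\longrightarrow U(\g)$ is a quasi-isomorphism, two-out-of-three shows that $\Omega q$ is one. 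With that correction (and noting that your filtration argument for injectivity of the underlying map is sound, indeed more detailed than what the paper records), your proof is complete and follows the same route as the paper's.
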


\textbf{\color{newcol}Structure.} 
The paper is organised
as follows. In Section~\ref{Sec1} we prove our
main theorem relating almost-free and derived PBW morphisms between
dg operads. We then use
it in Section~\ref{Sec2} to show that
the map from the homotopy
Lie operad to the homotopy associative operad is derived PBW and
deduce from this several results on universal envelopes of $L_\infty$-algebras, which recover results from Baranovsky and Moreno-Fernandez, and extend the Quillen quasi-isomorphism $\mathcal C\longrightarrow BU$ to $L_\infty$-algebras. We also apply 
our main theorem to show 
\emph{associative} 
universal envelopes 
satisfy the derived PBW property as 
soon as they are PBW.
The appendix, written by V.~Dotsenko, contains a general result on models of operads obtained by homological perturbation which we use in the particular case of the homotopy associative and the homotopy Poisson operad.

\bigskip
  
\textbf{\color{newcol}Notation and conventions.} 
We work over a field of
characteristic zero, which we write $\kk$. We assume the reader is familiar with the theory of
algebraic operads,
as presented, for example, in~\cite{LV}, with the elements of model theory, as presented, for example, in~\cite{Hov}, and
with the basic tools of homological algebra. 
Whenever a new definition is provided, it will 
appear in \new{boldfaced italics}. 

\bigskip

\textbf{\color{newcol}Acknowledgements.}
This paper saw great progress while the second author
was visiting the 
Higher School of Economics in Moscow.
We thank the Higher School of
Economics for their hospitality and
wonderful working conditions.
We thank
Vladimir Dotsenko for his constant
support and valuable advice during
the preparation of these notes, for
his insight into PBW theorems for
operads that motivated this sequel
to the joint work in~\cite{PBW}, and
for his careful reading of the manuscript.

We thank Alexander Efimov for useful
discussions that encouraged us
to write Section 2.3 and Ricardo Campos,
Daniel Robert-Nicoud and Luis Scoccola
for their useful comments and suggestions.
We also thank
Ben Knudsen for answering some questions
about his work~\cite{Knud} on enveloping $E_n$-algebras of
spectral Lie algebras, and Guillermo
Tochi for pointing us to this paper in the
first place. 

The research of A. Kh. was carried out within the HSE University Basic Research Program
and supported in part by the Russian Academic Excellence Project '5-100' and in part by the Simons Foundation.
\vspace*{\fill}
\pagebreak
\section{The derived PBW property}\label{Sec1}

For convenience, we remind
the reader of the language of~\cite{PBW}. Let
us fix a symmetric monoidal category $\mathsf{C}$
that can be either of $\mathsf{Vect}_\kk$, the 
category of graded vector spaces, $\mathsf{Ch}_\kk$, the category of complexes over $\kk$, ${}_{\Sigma}\mathsf{Mod}$, the category of
$\Sigma$-modules under the Cauchy tensor product
or $\mathsf{Ch}_\Sigma$, the category of dg
$\Sigma$-modules under the same product. 
We say a map of operads $f: P 
\longrightarrow Q$ over $\mathsf{C}$
satisfies the \new{Poincar\'e-Birkhoff-Witt
property} if there is 
an endofunctor
$T : \mathsf{C}\longrightarrow\mathsf{C}$ so that universal enveloping
algebra functor $f_!$ is naturally
isomorphic to $T$ with
respect to $P$-algebra maps. In other words, we demand that $f_!$ depend only on the underlying object of that algebra in $\mathsf C$, but not on the operations of that particular $P$-algebra. With this at hand, the main result of~\cite{PBW} is the following (we refer the reader to~\cite{Fresse} for details on operads
and their modules):

\begin{theorem*} The map $f:P\longrightarrow Q$ satisfies the
Poincar\'e-Birkhoff-Witt property if
and only if $Q$ is a free right 
$P$-module. Moreover, in this case,
if $T$ is a basis for $Q$ as a right $P$-module, then $f_!$ is
isomorphic to $T$, naturally with respect to $P$-algebra maps.  \qed
\end{theorem*}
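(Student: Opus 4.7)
The key tool is the explicit description of the left adjoint universal enveloping functor as a relative composition product:
\[
f_!(A) \;\cong\; Q \circ_P A,
\]
which is characterised by a coequaliser involving the right $P$-module structure on $Q$ and the $P$-algebra structure on $A$. The proof splits into an easy direction and a harder converse.

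\textbf{Easy direction (freeness implies PBW).} Assume $Q$ is free as a right $P$-module with basis $T$, i.e.\ $Q \cong T \circ P$ as right $P$-modules, where $T$ is a $\Sigma$-module. Substituting into the formula above, the relative composition product collapses:
\[
f_!(A) \;=\; Q \circ_P A \;\cong\; (T \circ P) \circ_P A \;\cong\; T \circ A \;=\; T(A).
\]
The resulting isomorphism is natural in $A$ and manifestly factors through the forgetful functor to $\mathsf{C}$, so $f$ satisfies PBW with the Schur functor of $T$ as witness.

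\textbf{Hard direction (PBW implies freeness).} Suppose $f_!(A) \cong T(A_\bullet)$ naturally in $P$-algebra maps, where $T$ is an endofunctor of $\mathsf{C}$. The plan is to specialise to free $P$-algebras: since $f_!$ is a left adjoint, it sends the free $P$-algebra $P(V)$ on $V\in\mathsf{C}$ to the free $Q$-algebra $Q(V)$. Evaluating the natural isomorphism on these produces a natural isomorphism of functors of $V$,
\[
Q \circ V \;\cong\; T(P \circ V).
\]
First, use this to identify $T$ itself with a Schur functor of some $\Sigma$-module (again denoted $T$): the left-hand side is polynomial of the usual Schur form, and comparing arity-wise in $V$ (using, say, the scaling action of $\mathbb{G}_m$ on $V$, or testing against $V = V_1 \oplus \cdots \oplus V_n$) forces $T$ to be of Schur type and gives an isomorphism $Q \cong T \circ P$ of $\Sigma$-modules. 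Finally, upgrade this to an isomorphism of right $P$-modules using naturality with respect to the $P$-algebra structure maps $P \circ P(V) \to P(V)$; these $P$-algebra endomorphisms precisely encode the right $P$-action on $Q$, and the isomorphism must intertwine them with the canonical action on $T \circ P$.

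\textbf{Main obstacle.} The delicate point is the passage from the $\Sigma$-module identification $Q \cong T \circ P$ to one of right $P$-modules, and the preliminary step of recognising the abstract endofunctor $T$ as a Schur functor. Both are controlled by the strength of the PBW hypothesis: naturality with respect to all $P$-algebra maps, and not merely maps of underlying objects, is what allows us to transport the right $P$-action across the isomorphism and conclude that $Q$ is free on $T$.
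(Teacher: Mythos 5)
First, a caveat: the paper you are working from does not prove this statement at all --- it is quoted verbatim from the earlier work of Dotsenko and the second author \cite{PBW} and closed with a \qedsymbol, so there is no internal proof to compare against; I will assess your argument on its own terms. Your easy direction is correct and is the standard computation: for $Q\cong T\circ P$ one has $f_!(A)=Q\circ_P A\cong (T\circ P)\circ_P A\cong T\circ A$, naturally in $P$-algebra maps and visibly depending only on the underlying object of $A$.

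The converse, however, has a genuine gap at its central step. From $f_!\cong T\circ U$ evaluated on free algebras you correctly obtain $Q\circ V\cong T(P\circ V)$, but this only constrains the composite functor $V\mapsto T(P\circ V)$ --- which you already knew is the Schur functor of $Q$ --- and it neither forces the abstract endofunctor $T$ to be a Schur functor nor produces a $\Sigma$-module $T'$ with $Q\cong T'\circ P$: testing on direct sums or on the scaling action only probes objects in the image of $P\circ(\place)$ and gives no way to ``divide by $P$''. Moreover, $T$ is not even the right target: what must be constructed is a basis, and the natural candidate is the module of relative indecomposables $T'=\coker\bigl(Q\circ_{(1)}\overline{P}\to Q\bigr)$ (equivalently, $Q\circ_P$ applied to a trivial algebra), which one splits off using characteristic zero and then compares with $Q$ via the induced map of right $P$-modules $T'\circ P\to Q$. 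Proving that this comparison map is an isomorphism is where the PBW hypothesis must enter in an essential way, through naturality with respect to $P$-algebra maps that are \emph{not} induced by maps of underlying objects (for instance the structure maps $P(P(V))\to P(V)$, or maps $P(V)\to P(W)$ given by arbitrary elements of $P(W)$). Your sketch names this naturality as the decisive input but never exhibits a single concrete use of it, so the implication from the PBW property to freeness is asserted rather than proved.
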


The new formalism we propose for dg
operads is as follows. Fix a dg operad $P$. We say a right dg 
$P$-module $X$ is \new{almost-free} if it admits
a bounded below exhaustive filtration so that its associated graded 
module is chain equivalent to a free right $P$-module 
on a basis of cycles. A morphism of dg operads
$f: P\longrightarrow Q$ is \new{almost-free} if
$Q$ is an almost-free right $P$-module. Note that
for every dg module $X$ we have a corresponding
right $HP$-module $HX$, where $H$ is the
homology functor. It follows that we have both
a left adjoint $f_!$ to the restriction
functor $f^* : {}_Q\Alg\longrightarrow {}_P\Alg$ and a left adjoint $(Hf)_!$ to the
restriction functor $(Hf)^* : {}_{HQ}\Alg\longrightarrow {}_{HP}\Alg$. In this way,
we obtain two functors 
\[H\circ f_! : {}_{P}\Alg\longrightarrow {}_{HQ}\Alg,\quad (Hf)_!\circ H : {}_{P}\Alg\longrightarrow {}_{HQ}\Alg\]
and a natural transformation $F : (Hf)_!\circ H \longrightarrow H\circ f_!$. 
The map $f$ is \new{derived Poincaré--Birkhoff--Witt} if $F$ is
a natural isomorphism.  Our main result is the following:

\begin{theorem}
Every morphism that is almost-free is derived PBW. 
Moreover, if $T$ is a basis of cycles for
such a morphism, the homology
$Hf_!$ of
its universal envelope is naturally isomorphic
to $TH$ as a functor of algebras on its 
domain to algebras over the homology of its
codomain. 
\end{theorem}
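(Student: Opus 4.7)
The plan is to lift the hypothesised chain equivalence on associated gradeds to a global chain equivalence of right dg $P$-modules, apply the relative composition product, and then reduce the remaining identification to the classical Poincaré--Birkhoff--Witt theorem recalled above. First I would produce a map $\alpha : T \circ P \longrightarrow Q$ of right dg $P$-modules whose associated graded is the given chain equivalence. Since the elements of $T$ are cycles in $\gr Q$, a standard inductive obstruction argument lifts a basis of $T$ to honest cycles in $Q$ compatibly with the filtration: if a candidate lift $\tilde t \in F_p Q$ has differential $d\tilde t \in F_{p-1} Q$, one uses the chain equivalence in lower filtration degree to modify $\tilde t$ by an element killing this boundary, and the boundedness below and exhaustiveness of the filtration make the process converge. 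Extending by the right $P$-action produces $\alpha$, and a filtered comparison theorem guarantees that $\alpha$, inducing a chain equivalence on associated gradeds, is itself a chain equivalence of right dg $P$-modules.

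Next I would transport $\alpha$ through the relative composition product. Applying $- \circ_P A$ to $\alpha$ yields a natural chain equivalence
\[
T(A) = (T \circ P) \circ_P A \longrightarrow Q \circ_P A = f_!(A).
\]
Since the generators of $T$ are cycles, the only nontrivial differential in $T(A)$ comes from $A$, and Künneth over characteristic zero gives $H(T(A)) = T(HA)$; hence $Hf_!(A) \cong T(HA)$ naturally in the $P$-algebra $A$. On the other side, the induced map $H\alpha : T \circ HP \longrightarrow HQ$ exhibits $HQ$ as a free right $HP$-module on the basis $T$, so the classical PBW theorem recalled above applies to $Hf : HP \longrightarrow HQ$ and gives $(Hf)_!(HA) \cong T(HA)$ naturally in the $HP$-algebra $HA$.

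It remains to verify that under these two identifications the comparison transformation $F : (Hf)_! \circ H \longrightarrow H \circ f_!$ becomes the identity on $T(HA)$. This is the step I expect to be the main obstacle, because $F$ is defined abstractly through the unit and counit of the restriction/extension adjunctions whereas the two identifications above come from the explicit chain equivalence $\alpha$. The cleanest path should be to trace $F$ on generators: a class $[t] \otimes [a]$ in $(Hf)_!(HA)$ with $t \in T$ and $a$ a cycle in $A$ is mapped by $F$ to the homology class of $\alpha(t)$ acting on $a$ in $f_!(A)$, which under the chain equivalence $T(A) \longrightarrow f_!(A)$ is precisely $[t] \otimes [a]$ in $T(HA)$; naturality in $A$ and in $f$ then follows from the naturality of $\alpha$, of the classical PBW isomorphism, and of $F$ itself.
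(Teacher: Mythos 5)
Your overall strategy is genuinely different from the paper's: the paper never rectifies $Q$ into an honestly free right $P$-module. Instead it argues in three steps: first the case $Q=T\circ P$ with $dT=0$, where the K\"unneth theorem for the composite product identifies both $H(f_!(X))$ and $(Hf)_!(HX)$ with $T\circ HX$; then, for a filtered $Q$, it transports the filtration to $f_!(X)=Q\circ_P X$ using left-linearity of $\circ$, compares the spectral sequences converging to the source and target of $F_X$, checks the induced map on the $E^1$-page is an isomorphism by the first step, and concludes by convergence; finally, it uses left-linearity of $\circ$ once more to replace ``free'' by ``chain equivalent to free'' on the associated graded level. The portion of your argument after the rectification (applying $-\circ_P A$, K\"unneth for $T$ with zero differential, the classical PBW theorem applied to $Hf$, and tracing $F$ on generators) is sound and would in fact give a conclusion stronger than the one the paper establishes.

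The gap is in your very first step, the lifting of the basis $T$ from $\gr Q$ to cycles of $Q$. If $\tilde t\in F_pQ$ lifts a cycle $t\in \gr_pQ$, then $d\tilde t$ lies in $F_{p-1}Q$ and is a cycle there, but it need not be a \emph{boundary} in $F_{p-1}Q$: the obstruction to correcting $\tilde t$ is the class of $d\tilde t$ in $H(F_{p-1}Q)$, and this class is computed by the higher differentials $d_r$, $r\geqslant 1$, of the spectral sequence of the filtration evaluated on $[t]$. The almost-freeness hypothesis only controls the associated graded, that is, the $E^0$- and $E^1$-pages; it says nothing about $d_1,d_2,\dots$, so ``the chain equivalence in lower filtration degree'' does not supply an element killing $d\tilde t$. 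Worse, the simultaneous vanishing of these obstructions for all $t\in T$ is essentially equivalent to the degeneration of the spectral sequence at $E^1$, which, already in the case $X=P$, is more or less the conclusion $HQ\cong T\circ HP$ that the theorem is after; assuming the lift exists therefore comes uncomfortably close to assuming what is to be proved. To repair the argument you would either have to justify the existence of the equivalence $\alpha:T\circ P\longrightarrow Q$ by an independent argument, or abandon the rectification and run the comparison directly at the level of spectral sequences, as the paper does.
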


Thus, in the same way
that a classical PBW theorem gives an
amenable description of universal 
enveloping algebra dependent only on 
the underlying object of the input,
a derived PBW
theorem gives us an amenable
description of the homology of the
universal 
enveloping algebra dependent only on the homology 
of the input.

\begin{proof} We split our proof into three 
steps. Our main tool is a classical spectral
sequence argument.

\bigskip

\textbf{\color{newcol}Step 1.} Suppose that $Q$ is in fact $P$-free
on a basis of cycles, so that $Q = 
T\circ P$ with $dT=0$. Since $T$ has trivial differential,
the Künneth theorem for the circle product
gives a natural isomorphism $HQ \longrightarrow
T\circ HP$, and shows that $HQ$ is a free right $HP$-module. Moreover, for
every left $P$-module $X$, we have natural
isomorphisms $H(f_!(X))= H(Q\circ_P X)\longrightarrow H(T\circ X) \longrightarrow T \circ HX$. We also have natural isomorphisms
$T\circ HX \longrightarrow T\circ HP \circ_{HP} X \longrightarrow HQ\circ_{HP} HX = (Hf)_!(HX)$,
which gives what we wanted: the natural map
$F_X: (Hf)_!(HX) \longrightarrow H(f_!(X))$ is
a natural isomorphism, so that $f$ is derived
PBW in this case.

\smallskip

\textbf{\color{newcol}Step 2.} Let us consider now the situation where we
have an almost-free filtration $F$ on
$Q$, and consider the induced filtration on
$f_!(X)$. Linearity on the left of the composite product gives us
that $\gr_F(f_!(X))$ is of the form $T\circ X$ where $T$ has trivial differential,
so that the domain of the $E^1$-page of the maps of spectral sequences
converging to $F_X: (Hf)_!(H(X))\longrightarrow H(f_!(X))$ looks like $T\circ HP$.
The arguments above now show that the induced
morphism at the $E^1$-page  is
an isomorphism.

\smallskip

\textbf{\color{newcol}Step 3.} To conclude, let us suppose that we have
a filtration on $Q$ such that $\gr(Q)$ is
chain equivalent to a free right 
$P$-module $Q'$. Arguing as before, we have a map of spectral sequences converging to $F_X : (Hf_!)(HX)\longrightarrow H(f_!(X))  $
with $E^0$ equal to $(\gr\, Q)\circ_P X$, and a
map $(\gr\, Q)\circ_P T \longrightarrow Q'\circ_P X= T\circ X$. Since the circle
product is left linear, this map is still
a chain equivalence, and thus induces an
isomorphism on homology. 

\smallskip

The last claim is already
part of the content of the main result in~\cite{PBW}. This 
concludes the proof of the theorem.
\end{proof}

We point out that the work of V. Hinich~\cite{Hinich}*{Section 4.6.3} 
proves that universal envelopes preserve
acyclic cofibrations between algebras:
a derived PBW theorem extends this to arbitrary
weak equivalences.

\begin{cor}
If $f:P\longrightarrow Q$ is derived
PBW, then $f_!$ preserves weak equivalences. \qed
\end{cor}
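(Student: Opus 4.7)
The plan is to deduce this essentially formally from the definition of derived PBW combined with functoriality. Let $\phi : X \longrightarrow Y$ be a weak equivalence of $P$-algebras, meaning that the induced map $H\phi : HX \longrightarrow HY$ is an isomorphism of $HP$-algebras. We want to show that $H(f_!(\phi)) : H(f_!(X)) \longrightarrow H(f_!(Y))$ is an isomorphism.

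First I would invoke the naturality of the transformation $F : (Hf)_!\circ H \longrightarrow H\circ f_!$ furnished by the derived PBW hypothesis to obtain a commutative square
\[
\begin{tikzcd}
(Hf)_!(HX) \arrow[r, "F_X"] \arrow[d, "(Hf)_!(H\phi)"'] & H(f_!(X)) \arrow[d, "H(f_!(\phi))"] \\
(Hf)_!(HY) \arrow[r, "F_Y"] & H(f_!(Y))
\end{tikzcd}
\]
in the category of $HQ$-algebras. By the derived PBW assumption, the horizontal arrows $F_X$ and $F_Y$ are isomorphisms. Since $H\phi$ is an isomorphism of $HP$-algebras and $(Hf)_!$ is a functor (the left adjoint to restriction along $Hf$), the left vertical arrow $(Hf)_!(H\phi)$ is an isomorphism. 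A diagram chase then forces the right vertical arrow $H(f_!(\phi))$ to be an isomorphism as well, which is exactly the claim that $f_!(\phi)$ is a weak equivalence.

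No obstacle arises here beyond unwinding the definitions: the entire content is transferred to the homology level by $F$, where preservation of isomorphisms by any left adjoint is automatic. The point is precisely that a derived PBW theorem, unlike the preservation of acyclic cofibrations proved by Hinich, computes the homology of $f_!(X)$ in terms of $HX$ alone, so that quasi-isomorphisms on the input produce isomorphisms on the output.
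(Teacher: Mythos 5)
Your argument is correct and is exactly the formal deduction the paper intends (the corollary is stated with no written proof precisely because it follows immediately from naturality of $F$ in this way). The naturality square, the invertibility of $F_X$ and $F_Y$ from the derived PBW hypothesis, and the fact that the functor $(Hf)_!$ preserves the isomorphism $H\phi$ together give the claim with no gaps.
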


\noindent It is worth pointing out that the PBW theorem
of V. Dotsenko and the second author in~\cite{PBW}
already implies the following classical result,
since there we show that the map of operads
$Hf : \Lie \longrightarrow \Ass$ is free and
thus PBW in the classical sense.

\begin{corollary*} Let $\g$ be a dg Lie
algebra. Then the natural map $UH(\g) \longrightarrow HU(\g)$ is an isomorphism of algebras, so
that a map of dg Lie algebras is
a quasi-isomorphism if and only if the map
on universal envelopes is one.\qed 
\end{corollary*}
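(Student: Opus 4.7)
The strategy is to apply the derived PBW theorem proven in this section to the inclusion of dg operads $f:\Lie \longrightarrow \Ass$, both of which have trivial differential. The only nontrivial input is classical and is highlighted in the remark preceding the statement: by the operadic PBW theorem established in~\cite{PBW}, $\Ass$ is free as a right $\Lie$-module, on some basis $T$. Since the differential on $\Ass$ vanishes, that basis automatically consists of cycles, so $f$ is almost-free when equipped with the trivial filtration, and hence \emph{derived} PBW by the main theorem. This produces a natural isomorphism $F:(Hf)_!\circ H \longrightarrow H\circ f_!$ between functors from dg Lie algebras to graded associative algebras.

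Next I would unpack what this says concretely in our situation. Because $H\Lie=\Lie$ and $H\Ass=\Ass$, the functor $(Hf)_!$ is just the classical universal enveloping functor $U$ applied to a graded Lie algebra, while $f_!$ on a dg Lie algebra is the dg universal envelope $U(\g)$. Evaluating $F$ at any dg Lie algebra $\g$ therefore produces the desired natural map $UH(\g)\longrightarrow HU(\g)$, and this map is an isomorphism. Inspection of the three-step proof of the main theorem shows that each intermediate map is in fact a morphism of algebras, so this isomorphism is one of graded associative algebras.

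For the second assertion, the forward direction is immediate from the corollary that a derived PBW universal envelope preserves weak equivalences: any quasi-isomorphism $\g\longrightarrow \g'$ of dg Lie algebras is sent to a quasi-isomorphism $U(\g)\longrightarrow U(\g')$. For the converse, if $U(\g)\longrightarrow U(\g')$ is a quasi-isomorphism then $HU(\g)\longrightarrow HU(\g')$ is an isomorphism of algebras, and by the naturality of the isomorphism established in the previous paragraph, so is $UH(\g)\longrightarrow UH(\g')$. To extract the underlying map of graded Lie algebras, I would recover $H(\g)$ inside $UH(\g)$ either as the subspace of primitives of the standard Hopf algebra structure, or directly as the degree-one piece of the PBW filtration; either description is visibly preserved under any algebra map coming from a Lie algebra map, and forces $H(\g)\longrightarrow H(\g')$ to be an isomorphism.

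There is no real obstacle, since the substantive work is already contained in the main theorem and its corollary; the only things to verify are that the inputs (almost-freeness of $f$, and functorial recovery of the Lie algebra from the universal envelope) hold in this classical setting, and both are standard.
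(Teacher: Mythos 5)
Your proposal is correct and follows essentially the same route as the paper: the paper's justification is precisely the preceding remark that $\Ass$ is a free right $\Lie$-module by the operadic PBW theorem of~\cite{PBW}, so that (the differential being trivial) the map $\Lie\longrightarrow\Ass$ is almost-free and Theorem 1.1 applies. Your extra details on extracting the converse of the second claim (recovering $H(\g)$ as the primitives of $UH(\g)$) are a reasonable filling-in of what the paper leaves implicit.
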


\textbf{\color{newcol}Relation to a theorem of Adams.} It is
interesting to point out that in~\cite{Adams} (see also~\cite{Guide}*{Chapter 9}), the author shows
that if $f :\Lambda\longrightarrow \Gamma$ is
an inclusion  of Hopf algebras and $\Lambda$
is central in $\Gamma$, then $B\Gamma$ admits
a filtration whose graded coalgebra is chain
equivalent to $(B\Lambda\otimes B\Omega, d\otimes 1)$, so that $Bf$ is almost-free
according to our definition. In fact,
this result was the main inspiration for our
definition of almost-free morphisms. We remark that
in the context of differential graded homological
algebra, the objects which posses useful homological properties and replace, in a way,
the free objects of the classical  theory, are
sometimes called ``semi-free modules''. We chose
not to use this terminology to avoid any kind of
confusion. It is useful to 
note that one can just assume that $\Gamma$
is $\Lambda$-free along with the fact that
$\Lambda$ is central to deduce this. Hence,
Adams' result states precisely that, under this
last extra hypothesis, $B$ preserves almost-free maps. Having appropriate hypotheses and a similar result for $\Omega$, one would obtain 
results relating classical PBW maps of operads,
as defined in~\cite{PBW}, to derived PBW maps
between cofibrant replacements. We intend to 
pursue these ideas in the future.

\section{Applications}\label{Sec2}
In this section we prove the derived version of the classical PBW theorem,
which we then use to deduce  
results of V. Baranosvky and J. 
M. Moreno-Fernández,  both who constructed
universal envelopes for $L_\infty$-algebras, and
answer in the positive a 
conjecture of Moreno-Fernández regarding
the universal envelope construction
considered by Lada--Markl in~\cite{Lada}.
Observe that the motto of~\cite{PBW} that ``the universal 
envelope of $\g$ is independent 
of the Lie algebra structure of 
$\g$'' is now replaced by ``the 
homology of the universal 
envelope of $\g$ is independent 
of the homotopy Lie algebra 
structure of $\g$''. 
Since all
three constructions have their
particular intricacies, let us 
begin by recalling 
the essential definitions of the
respective universal envelopes
for $L_\infty$-algebras.

\bigskip

\textbf{\color{newcol}The universal envelope as a left adjoint.} Let us
recall the work of Lada--Markl~\cite{Lada} that
generalizes the well-known fact the
antisymmetrization of the product of
an associative algebra yields a 
Lie algebra. In~\cite{Lada}, the authors show that if $A$ is an 
$A_\infty$-algebra with higher
products $(m_1,m_2,m_3,\ldots)$ and
if we set, for each $n\in \mathbb N$,
\[\label{eq:ant}\tag{1} l_n = \sum_{\sigma\in S_n} (-1)^\sigma m_n\cdot \sigma \]
these maps define on $A$ an 
$L_\infty$-algebra structure. This
implies there is a map of operads
$f:\hLie\longrightarrow 
\hAss$ defined by (\ref{eq:ant}). As in the classical
case, it makes sense to define the
universal envelope of an $L_\infty$-algebra through the left adjoint $f_!$ of the map that assigns an $A_\infty$-algebra $A$ to the corresponding
$L_\infty$-algebra $f^*(A)$, which we write $A^\circ$, following
the prescription above: this
is the quotient of the free $A_\infty$-algebra on $\g$ by the relations imposed by the equation~(\ref{eq:ant}).  We will
write $U(\g)$ for $f_!(\g)$ and call it \new{the universal enveloping algebra of $\g$}; since this
universal algebra is given unequivocally by the same formalism
that defines the classical universal
envelope of dg Lie algebras,
we refrain from giving it any other
name. Note that by construction
there is a unit map $\g\longrightarrow U(\g)^\circ$ which is a strict 
morphism of $L_\infty$-algebras.

\bigskip

\textbf{\color{newcol}The construction of Baranosvky.} Let us recall that
if $\g$ is a dg Lie algebra, there
is a natural quasi-isomorphism of
dga algebras $q : \Omega \mathcal{C}(\g)\longrightarrow U(\g)$ which
assigns a generator of the Chevalley--Eilenberg complex $\mathcal{C}(\g)$ to its antisymmetrization in $U(\g)$. 
Baranosvky~\cite{Bar} shows 
that there is a contraction of
complexes
$\Omega \mathcal{C}(\g')\longrightarrow S(\g')$ where
$\g'$ is the abelian algebra associated to $\g$.
To define $U(\g)$ for the more
general class of $L_\infty$-algebras,
Baranovsky resorts to a perturbative
method, as follows. For such an algebra $\g$, the key ingredients of his construction are:
\begin{titemize}
\item the contraction of complexes $\Omega \mathcal{C}(\g')\longrightarrow S(\g')$,
\item the resulting contraction of dgc coalgebras
$B\Omega \mathcal{C}(\g')\longrightarrow BS(\g')$.
\item the fact the differential of $B\Omega \mathcal{C}(\g)$ is a perturbation of the differential of
$B\Omega \mathcal{C}(\g')$. 
\end{titemize}
This two facts imply, together with the homological perturbation lemma, that
there is on $BS(\g')$ a dgc coalgebra
structure that is quasi-isomorphic
to $B\Omega \mathcal{C}(\g)$. In
other words, there is on $S(\g')$
an $A_\infty$-algebra structure
that is $A_\infty$-quasi-isomorphic
to $\Omega \mathcal{C}(\g)$. Moreover, Baranosvky shows the
PBW inclusion $\g\longrightarrow S(\g)^\circ$ is a strict map of $L_\infty$-algebras. We 
call this the \new{Baranosvky universal enveloping algebra of $\g$}. Note that $\Omega \mathcal{C}(\g)$ has the same homotopy type
as $U(\g)$, so it makes sense to focus on this
object to elucidate a universal enveloping algebra,
since $\Omega \mathcal{C}(\g)$ exists for \emph{any} $L_\infty$-algebra.

\bigskip

\textbf{\color{newcol}The construction of Moreno-Fern\'andez.}
The approach of Moreno-Fern\'andez~\cite{Jose}
is slightly different from that of
Baranovsky, but is also perturbative
in nature. Let us take a dg Lie
algebra $\g$, and assume we have
a contraction of complexes from
$\g$ onto $H(\g)$. The homotopy
transfer theorem then guarantees
there is an $L_\infty$-structure on
$H(\g)$ which makes $H(\g)$ an $L_\infty$-algebra $L_\infty$-quasi-isomorphic to $\g$. The author then
shows there there is an explicit contraction of complexes $U(\g)\longrightarrow S(H(\g))$ which,
by the homotopy transfer theorem,
gives us an $A_\infty$-algebra structure on $S(H(\g))$ which is
 $A_\infty$-quasi-isomorphic to $U(\g)$. Moreover, one can arrange
 it so that the inclusion
$H(\g)\longrightarrow S(H(\g))^\circ$
is a strict map of $L_\infty$-algebras.

We have not explained yet how to
define universal envelopes of
$L_\infty$-algebras, however; we do it only for minimal algebras. In this case, we can take the dg Lie
algebra $\mathcal L\mathcal C(\g)$ that comes equipped with an $L_\infty$-quasi-isomorphism $\mathcal L\mathcal C(\g)\longrightarrow \g$ ---this is the
so-called rectification theorem, 
obtained by the bar-cobar construction--- so we can proceed
with the prescription of Moreno-Fern\'andez to define on $S(\g)$ an
$A_\infty$-structure so that the
PBW inclusion $\g\longrightarrow S(\g)^\circ$ is a strict map of $L_\infty$-algebras. We call this the \new{Moreno-Fern\'andez universal enveloping algebra of $\g$.}

\bigskip

\subsection{The derived PBW property of the morphism \texorpdfstring{$\hLie\longrightarrow 
\hAss$}{LieAss}}

\begin{theorem}\label{thm:main} The morphism $f:\hLie\longrightarrow 
\hAss$ is almost-free, so it is derived PBW.
\end{theorem}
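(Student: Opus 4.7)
By the main theorem of Section~\ref{Sec1}, it is enough to prove that $\hAss$ is almost-free as a right $\hLie$-module: to produce a bounded-below exhaustive filtration of $\hAss$ whose associated graded is chain-equivalent to a free right $\hLie$-module on a basis of cycles. The starting point will be the classical PBW theorem of~\cite{PBW}: the morphism $f:\Lie\to\Ass$ is PBW, with $\Ass\cong S\circ\Lie$ as right $\Lie$-modules for an explicit $\Sigma$-module $S$.

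The first step is to work on the Koszul dual side. The antisymmetrization $\Lie\to\Ass$ corresponds to a cooperad inclusion $\iota:\Com^{\antishriek}\hookrightarrow\Ass^{\antishriek}$, and I would fix a $\Sigma$-module splitting $\Ass^{\antishriek}\cong\Com^{\antishriek}\oplus C$. This marks each vertex of a tree in the underlying graded free operad $\hAss=\Omega\Ass^{\antishriek}$ as either $\Com^{\antishriek}$-type or $C$-type; since $\iota$ is a cooperad map, the cobar differential applied to a $\Com^{\antishriek}$-vertex produces only $\Com^{\antishriek}$-subtrees, while $C$-vertices may produce mixed subtrees.

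I would then filter $\hAss$ by a statistic that isolates the maximal $\Com^{\antishriek}$-sub-forest at the leaves of each tree (the part to be absorbed into the right $\hLie$-factor) from the residual ``top'' part. On the associated graded, the cobar differential decouples: the $\Com^{\antishriek}$-sub-forests evolve by the $\hLie$-differential, while the top part carries a residual differential. This realises $\gr(\hAss)\cong T\circ\hLie$ as right $\hLie$-modules, where $T$ is a complex built from trees with distinguished $C$-vertices at the top.

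The main obstacle will be to show that $T$ is chain-equivalent, as a $\Sigma$-module, to the classical PBW basis $S$ viewed as concentrated in degree zero with trivial differential; once this is in hand, the main theorem of Section~\ref{Sec1} applies. This is precisely the kind of transfer problem addressed in V.~Dotsenko's appendix: a homological perturbation argument contracts the $C$-top complex onto its homology, provided the contraction can be chosen compatibly with the right $\hLie$-action. An alternative route, closer in spirit to~\cite{PBW}, is to pass to shuffle operads and to produce the decomposition and the filtration simultaneously via a Gröbner basis for $\hAss$ whose normal forms manifestly lie above a forest of $\Com^{\antishriek}$-trees.
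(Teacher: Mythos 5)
Your overall strategy is the right one---exhibit a bounded-below exhaustive filtration of $\hAss$ as a right $\hLie$-module whose associated graded is chain equivalent to a free module on a basis of cycles, then invoke Theorem 1.1---but the two steps that carry all the content are left as declarations of intent rather than arguments. First, the filtration: you never define your ``statistic'' precisely, and the claim that the cobar differential ``decouples'' on the associated graded is not automatic. The complement $C$ of $\Lie^{\antishriek}$ in $\Ass^{\antishriek}$ (note: the sub-cooperad inducing $\hLie\longrightarrow\hAss$ is $\Lie^{\antishriek}$, not $\Com^{\antishriek}$, whose cobar construction would be $\Com_\infty$) is only a $\Sigma$-module complement, so the infinitesimal decomposition of a $C$-labelled vertex has components in all four blocks of $(\Lie^{\antishriek}\oplus C)\circ_{(1)}(\Lie^{\antishriek}\oplus C)$; whether a vertex-marking statistic kills exactly the unwanted cross terms is precisely what has to be checked, and it is the hard part. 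Second, the chain equivalence between your ``top'' complex $T$ and the classical PBW basis $S=\Com$ concentrated in degree zero is acknowledged as ``the main obstacle'' and then outsourced to the appendix, but the appendix does something different: it perturbs the minimal resolution of the associated graded of a weight-filtered \emph{operad presentation} into a resolution of the original operad; it is not a device for equivariantly contracting a right-module decomposition onto a trivial-differential basis.

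For comparison, the paper threads both needles through the Poisson operad. The Loday--Livernet presentation of $\Ass$ (a commutative product together with a bracket that is a derivation of it) is weight-graded with the bracket in weight zero and the product in weight one; the associated graded relations are those of $\Pois$, and the appendix then yields a filtration of $\hAss$ with $\gr(\hAss)\cong\hPois$. Since $\hLie$ sits in weight zero, this is a filtration of right $\hLie$-modules. The chain equivalence $\hPois\simeq\Com\circ\hLie$ is then obtained not by an ad hoc contraction but from the distributive law $\Pois=\Com\vee_\lambda\Lie$, lifted to a distributive law between $\Com$ and $\hLie$, which produces a surjective quasi-isomorphism of right $\hLie$-modules $\hPois\longrightarrow\Com\circ\hLie$ admitting a section and an equivariant contracting homotopy in characteristic zero. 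To salvage your direct tree-combinatorial route you would need to supply both a precise filtration with the decoupling property and an equivariant contraction of $T$ onto $\Com$; as written, the proposal identifies the shape of the proof but establishes neither of its two essential claims.
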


\begin{proof} 
We begin with recalling from~\cite{LV}*{Prop.~9.1.5} that the Loday--Livernet
presentation of the associative operad is given by a commutative non-associative product
$x_1x_2$ and an anti-symmetric Lie bracket $[x_1,x_2]$ that is a derivation of the product
and satisfies the identity
\[ (x_1x_2)x_3-x_1(x_2x_3) = [x_2,[x_1,x_3]]. \]
Let us consider the weight grading on the space of generators which assigns weight zero to the Lie bracket and 
weight one to the product. The associated graded relations with respect to this filtration are the relations of the Poisson operad, and the underlying $\Sigma$-modules of $\Pois$ and $\Ass$ are isomorphic. We are therefore in the situation where result of Appendix applies: there exists a quasi-free resolution of $\Ass$ whose differential is obtained from $d_{\hPois}$ by a perturbation that lowers the weight grading. Moreover, the space of generators of this resolution can be identified with the Koszul dual cooperad of $\Pois$ whose underlying $\Sigma$-module is isomorphic to that of the Koszul dual cooperad of $\Ass$; therefore, this resolution has to be minimal and isomorphic to $\hAss$. 
The dg suboperad $\Lie_\infty$ is in weight filtration zero, so the weight filtration is a filtration of right $\hLie$-modules whose associated graded operad is $\hPois$. 

To complete the proof, we will show that there is a chain homotopy equivalence of right $\,\hLie$-modules $\pi :\hPois\longrightarrow \Com\circ \hLie$. For this, we use the language of distributive laws between operads~\cite{LV}*{Sec.~8.6.3}. The distributive law $\lambda$ that gives rise to the isomorphism $\Pois = \Com\vee_\lambda \Lie$ can be enhanced to 
a distributive law $\lambda'$ between the operads $\Com$ and $\hLie$, for which all higher brackets are derivations with
respect to the commutative product; there is a surjective quasi-isomorphism $\Com\vee_{\lambda'}\hLie
\rightarrow \Com \vee_\lambda \Lie = \Pois$. Hence, we get a surjective quasi-isomorphism 
 \[
\hPois\longrightarrow \Com \circ \hLie = \Com\vee_{\lambda'}\hLie.
 \]
Since we are working over a field of characteristic zero, we can produce a map $i : \mathsf{Com}\longrightarrow \mathsf{Com}_\infty$ such that $p i =1$, where $p$ is the projection onto homology. This then gives us a
map $j:\mathsf{Com}\circ \hLie \longrightarrow
\hPois$ by composing with the composition 
of $\hPois$, and this map is a section of
$\pi$. Since $\hPois$ is free as an operad,
and since we're working over a field of characteristic zero, we can produce an equivariant
contracting homotopy $h$ for $j\pi$. This 
completes the proof that $\hLie\longrightarrow
\hAss$ is almost-free and, by Theorem 1.1,
it is derived PBW. 
\end{proof}

The algebras over the operad $\Com\vee_{\lambda'}\hLie$ used in the proof are sometimes
called \emph{homotopy Poisson algebras} or \emph{$P_\infty$-algebras} in the literature,
even though this operad is not cofibrant. These have been considered by A. S. Cattaneo and 
G. Felder, and independently by T. Voronov, and are related to the theory of
Lie and Courant algebroids, and Poisson manifolds, see~\cites{Voronov, Voronov2, Catt} for example. 

\subsection{Quillen theorem for \texorpdfstring{$L_\infty$}{Linfty}-algebras}

We recall that if $\g$ is an $L_\infty$-algebra,
the \new{bar construction on $\g$} is the
commutative dgc coalgebra $\mathcal{C}(\g)$
with underlying coalgebra $S^c(s\g)$, the
free commutative coalgebra on the suspension
of $\g$, and with differential $d:\mathcal{C}(\g)\longrightarrow \mathcal{C}(\g)$ induced
from the higher brackets of $\g$; the higher
Jacobi identities for these higher brackets
are equivalent to the single equation $d^2=0$.
We write $x_1\wedge\cdots\wedge x_t$ a
generic element from $\mathcal{C}(\g)$, omitting
the suspensions signs for ease of notation. 
Observe this element is simply the anti-symmetrization of the corresponding elementary tensor $sx_1\otimes\cdots \otimes sx_t$ in $T^c(s\g)$.

Similarly, if $A$ is an $A_\infty$-algebra,
the \new{bar construction on $A$} is the
dgc coalgebra $BA$ with underlying coalgebra
$T^c(sA)$, the free coalgebra on the suspension
of $A$, and with differential $d: BA\longrightarrow BA$ induced from the higher
products of $A$; the Stasheff identitis for
these higher products are equivalnt to the 
single equation $d^2$. We write a $[x_1\vert\cdots\vert x_t]$ a generic element
of $BA$. Observe that we can
apply both constructions, in particular,
to dg Lie algebras and dga algebras, of course.
Finally, if $C$ is a dgc coalgebra, the 
\new{cobar constuction on $C$} is the dga algebra
$\Omega C$ with underlying algebra $T(s^{-1}C)$,
the free algebra on the desuspension of $C$,
with differential induced from the comultiplication and differential of $C$.
Concretely, it is the unique derivation
of $\Omega C$ that extends the map $s^{-1}C\longrightarrow \Omega C$ such that
$d(s^{-1}c) = s^{-1}dc -s^{-1}\otimes s^{-1}\Delta c$.

\smallskip
Let us recall from~\cite{TFH}
that for a dg-Lie algebra $(\g,d)$ there
is a quasi-isomorphism $q:\mathcal C(\g) \longrightarrow 
BU(\g)$ where the left hand side is the Quillen
construction on $\g$ (that coincides with the Chevalley-Eilenberg complex of $\g$) and the right hand side
is the associative bar construction on the universal envelope of $\g$.  We reminder the
reader that $q$ is determined uniquely
by a map $\tau : \mathcal{C}(\g) \longrightarrow U(\g)$, which we call the \new{twisting cochain
associated to $q$}. That this be a twisting
cochain is equivalent to the Maurer--Cartan
equation $d\tau +\tau\star \tau = 0$. Here, 
the
star product of
the convolution dga algebra $A=\hom(\mathcal{C}(\g),U(\g))$ is defined by $\star = \mu(-\otimes -)\Delta$. 
\smallskip

We also remind the reader that $\tau$ simply sends a 
generator $sg\in s\g$ to 
the class of its desuspension in $U(\g)$.
The following theorem extends this picture
to the case of $L_\infty$-algebras ---we will observe 
below $\tau$ \emph{also} defines a twisting morphism in this
more general setting. In this case, the 
Maurer--Cartan equation is replaced by a higher
analog: since $U(\g)$ is now an 
$A_\infty$-algebra, the convolution algebra
above is in fact again an $A_\infty$-algebra,
so that for each $t\in\NN$
and each $f_1,\ldots,f_t\in A$, we have
higher multiplications defined by 
 $m_t(f_1,\cdots,f_t) = \mu_t (f_1\otimes\cdots \otimes f_t)\Delta^{(t)}$. The Maurer--Cartan
 equation incorporates these higher products
 and now reads:
\[ d\tau + \sum_{t\geqslant 2} \mu_t(\tau,\ldots,\tau) = 0.\]
It is useful to remember this conditions simply
codifies, with the least amount of information
possible, the fact that the corresponding map
$\mathcal{C}(\g)\longrightarrow BU(\g)$ is one
of dgc coalgebras, and that this assignment
defines a bijection between dgc coalgebra maps
$\mathcal{C}(\g)\longrightarrow BU(\g)$ and
twisting cochains $\mathcal{C}(\g)\longrightarrow 
U(\g)$. For details, we refer the reader to the
book~\cite{LV}.
\pagebreak

Before stating the result, we recall 
that the category of dgc coalgebras $\Cog$ 
is a model
category where the cofibrations are the
degree-wise monomorphisms, the weak equivalences
are the quasi-isomorphisms and cofibrations
satisfy the right lifting property with 
respect to acyclic fibrations; all objects are cofibrant, and the 
fibrant dgc coalgebras are the quasi-free ones, see~\cite{Val}. A map of
coalgebras is a weak equivalence if and only if
its image under the cobar construction is a 
quasi-isomorphism of dga algebras. This class is
\emph{strictly contained} in the class of quasi-isomorphisms.

\begin{theorem}\label{thm:QM}
For any 
$L_\infty$-algebra $\g$ the map $q : \mathcal C(\g) \longrightarrow 
BU(\g)$ corresponding to the twisting cochain
$\mathcal C(\g) \longrightarrow U(\g)$ that assigns
a generator $sg\in \mathcal{C}(\g)$ the
class of its desuspension is an acyclic cofibration of coalgebras.
If $\g$ is minimal, then there are acyclic cofibrations of the form $\mathcal{C}(\g) \longrightarrow BS(\g)$. 
\end{theorem}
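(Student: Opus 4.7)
First, I would verify that the assignment $\tau\colon\mathcal{C}(\g) \to U(\g)$ sending $sg\in s\g$ to the class of $g\in \g\subset U(\g)$, and vanishing on $S^c_{\geq 2}(s\g)$, satisfies the generalized Maurer--Cartan equation $d\tau+\sum_{n\ge 2}m_n(\tau,\ldots,\tau)\circ\Delta^{(n)}=0$. Evaluating on $sg_1\wedge\cdots\wedge sg_n$, the $n$-fold cocommutative iterated coproduct produces the antisymmetrized sum of $sg_{\sigma(1)}\otimes\cdots\otimes sg_{\sigma(n)}$, and $m_n\circ\tau^{\otimes n}$ then outputs $\sum_\sigma(-1)^\sigma m_n(g_{\sigma(1)},\ldots,g_{\sigma(n)})$; the equation therefore reduces to the Lada--Markl identity~\ref{eq:ant} for $l_n$, which holds precisely because the PBW inclusion $\g\to U(\g)^\circ$ is strict. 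This yields the coalgebra map $q$. Its cofibration property is immediate: on underlying graded coalgebras, $q$ factors as $S^c(s\g)\hookrightarrow T^c(s\g)\hookrightarrow T^c(sU(\g))$, where the first arrow is antisymmetrization (injective in characteristic zero) and the second is induced by the injective PBW map $\g\hookrightarrow U(\g)$.

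To prove acyclicity, I would pass through the cobar functor: $q$ is a weak equivalence in the coalgebra model structure if and only if $\Omega q$ is a quasi-isomorphism of dga algebras. Since the bar--cobar counit $\epsilon\colon\Omega BU(\g)\to U(\g)$ is always a quasi-isomorphism, two-out-of-three reduces the problem to showing that the adjoint composite $\varphi=\epsilon\circ\Omega q\colon\Omega\mathcal{C}(\g)\to U(\g)$ is a quasi-isomorphism. Using that $U$ preserves quasi-isomorphisms of $L_\infty$-algebras (a direct consequence of Theorem~\ref{thm:main} combined with the corollary of Theorem~1.1), and that $\mathcal{C}$ preserves them as well, picking a minimal model $\g^{\min}\to\g$ reduces the problem to the case when $\g$ is minimal.

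For minimal $\g$ the plan is to filter both $\Omega\mathcal{C}(\g)$ and $U(\g)$ by the total number of elements of $\g$ appearing in a given expression, and compare the resulting spectral sequences of associative algebras. On the associated graded, $U(\g)$ collapses to the symmetric algebra $S(\g)$ by derived PBW (Theorem~\ref{thm:main}), while $\Omega\mathcal{C}(\g)$ collapses to the cobar on the cocommutative coalgebra $S^c(s\g)$ with trivial differential, which is quasi-isomorphic to $U(\g_{\mathrm{ab}})=S(\g)$ by Quillen's classical theorem for abelian Lie algebras. The map induced by $\varphi$ at the $E^1$-page is the classical PBW isomorphism, so $\varphi$ is a quasi-isomorphism. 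The main obstacle is to check that both filtrations really are preserved by the respective differentials and by $\varphi$, and that the induced $E^1$-map coincides with Quillen's isomorphism; this requires careful bookkeeping of how the higher brackets $l_n$ in the CE differential of $\mathcal{C}(\g)$ and the higher $A_\infty$-products $m_n$ in the defining relations of $U(\g)$ redistribute symmetric weight.

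For the second statement, part~(2) of the paper's main theorem equips $S(\g)$ with a minimal $A_\infty$-structure $A_\infty$-quasi-isomorphic to $U(\g)$ for which $\g\hookrightarrow S(\g)^\circ$ is a strict $L_\infty$-morphism. Repeating the construction of $\tau$ with $U(\g)$ replaced by $S(\g)$ gives a coalgebra map $q_S\colon\mathcal{C}(\g)\to BS(\g)$, which is a monomorphism by the same factorization argument. Its acyclicity can be deduced either by rerunning the $\Omega$-argument above (the minimality of $S(\g)$ simplifies the associated graded considerably) or, more economically, by composing $q$ with the weak equivalence $BU(\g)\to BS(\g)$ induced by the $A_\infty$-quasi-isomorphism.
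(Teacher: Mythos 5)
Your first step (unwinding the higher Maurer--Cartan equation for $\tau$ into the Lada--Markl antisymmetrization identity) is exactly the paper's opening move, and your explicit factorization argument for the cofibration property is a welcome addition that the paper leaves implicit. Where you diverge is on acyclicity: the paper mentions two routes and takes the short one, namely it invokes Baranovsky's quasi-isomorphism $\Omega\mathcal C(\g)\longrightarrow S(\g)$ together with the derived PBW identification of $H(U(\g))$ with the enveloping algebra of $(\g,l_2)$ to produce a quasi-isomorphism $BU(\g)\longrightarrow BS(\g)$, and then concludes by two-out-of-three using $\varepsilon_{U(\g)}\Omega q$. You instead propose to rerun the filtration-by-symmetric-weight spectral sequence comparison of $\Omega\mathcal C(\g)$ and $U(\g)$ from scratch; this is precisely the alternative the paper points to (``argue as in~\cite{Bar}*{Theorem 3}''), so the strategy is sound, but be aware that you have deferred the entire technical content of that route --- checking that the differentials and $\varphi$ preserve the filtrations and that the $E^1$-map is the classical PBW isomorphism is where all the work lives, and the paper's preferred argument exists exactly to avoid redoing it.

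Two caveats on your reduction to the minimal case. First, the corollary that $f_!$ preserves weak equivalences applies to \emph{strict} morphisms of $\hLie$-algebras, whereas a minimal model $\g^{\min}\rightsquigarrow\g$ is in general only an $\infty$-quasi-isomorphism; to make the reduction honest you must pass through a zigzag of strict quasi-isomorphisms via rectification (e.g.\ $\mathcal L\mathcal C(\g^{\min})\longrightarrow\mathcal L\mathcal C(\g)$ together with the counits), and check naturality of $q$ along it. The paper sidesteps this entirely since Baranovsky's contraction is available for arbitrary $L_\infty$-algebras. Second, the injectivity of $\g\longrightarrow U(\g)$ used in your cofibration argument deserves a word: the defining relations of $U(\g)$ only preserve the weight \emph{filtration}, not the grading, so injectivity on the weight-one piece is true but not completely formal.
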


\begin{proof}
Let us begin by observing that the Maurer--Cartan 
equation for $\tau$ is simply a restatement that 
the higher products in $U(\g)$ antisymmetrize to 
the higher brackets of $\g$. Indeed, for
a generator $x = x_1\wedge\cdots\wedge x_t$ in the domain, we have that
$\tau dx$ is equal to the higher
bracket $[x_1,\cdots,x_t]$,
up to signs, 
while the only non-zero term involving
higher products of $\tau$
involves $\Delta^{(t)}(x)$. This
is just the signed sum over $\sigma\in S_t$ of $\sigma x$, and then
$m_t(\tau,\ldots,\tau)(x)$ is precisely
the antisymmetrized higher
product of $x_1\otimes\cdots\otimes x_t$. 
\smallskip

To see that $q$ is a
weak equivalence one can show that $\Omega\mathcal{C}(\g)\longrightarrow S(\g)$ is one. To do this, one can argue 
as in~\cite{Bar}*{Theorem 3}, or note that
 there is a morphism of $A_\infty$-algebras
$BU(\g)\longrightarrow BS(\g)$, where the right
hand side is Baranovsky's universal envelope, that
is a quasi-isomorphism. Indeed, the map $H(U(\g))\longrightarrow S(\g)$ is an automorphism of
the enveloping associative algebra of $(\g,l_2)$. This implies 
that $\Omega q$ is a quasi-isomorphism, since
$\varepsilon_{U(\g)}:\Omega BU(\g)\longrightarrow U(\g)$ is
one, and $\Omega \mathcal C(\g)\longrightarrow
 U(\g)$ is a quasi-isomorphism that factors as  $\varepsilon_{U(\g)}\Omega q$.
\end{proof}

We write $A^\antishriek$ for the homology of $BA$
and, for an $L_\infty$-algebra $\g$, we write $\g^\antishriek$ for the 
homology of $\mathcal{C}(\g)$. An immediate corollary of the 
previous theorem is the following; all claims follow from the general theory
of twisting cochains; see~\cite{Bar}*{Theorem 2}. 

\begin{cor}
Let $\g$ be a minimal $L_\infty$-algebra. Then
\begin{tenumerate}
\item the twisted complex $\mathcal{C}(\g)\otimes_\tau S(\g)$ is
quasi-isomorphic to $\kk[0]$,
\item we have an
isomorphism  $\g^\antishriek \longrightarrow S(\g)^!$ of
minimal $A_\infty$-coalgebras,
\item the categories of $\g$-modules and $U(\g)$-modules are
equivalent and,
\item the functors $\;\?\otimes_\tau
S(\g): \mathsf{D}(\mathcal{C}(\g))\leftrightarrows \mathsf{D}(S(\g)):\;\?\otimes_\tau
\mathcal{C}(\g)$ are mutually\\ inverse derived equivalences.\hfill \qed
\end{tenumerate}
\end{cor}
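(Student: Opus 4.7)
The whole corollary should fall out of Theorem~\ref{thm:QM} together with standard facts from the Koszul--Morita theory of twisting cochains, once we observe that the morphism $\tau:\mathcal{C}(\g)\longrightarrow S(\g)$ is an \emph{acyclic} twisting cochain, that is, its adjoint $\mathcal{C}(\g)\longrightarrow BS(\g)$ is a weak equivalence of dgc coalgebras. Indeed, this is exactly the last statement of Theorem~\ref{thm:QM}. The plan is to derive each claim by invoking the corresponding standard consequence of being an acyclic twisting cochain, adapting the argument from~\cite{Bar}*{Theorem 2} to the $A_\infty$-setting.

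For (1), I would argue that the twisted complex $\mathcal{C}(\g)\otimes_\tau S(\g)$ is the Koszul complex attached to $\tau$. A twisting cochain is acyclic if and only if its associated Koszul complex is quasi-isomorphic to $\kk[0]$, so (1) is a direct translation of the acyclic cofibration in Theorem~\ref{thm:QM}. For (2), note that $\g^{\antishriek}=H\mathcal{C}(\g)$ and $S(\g)^!=HBS(\g)$, and both carry transferred minimal $A_\infty$-coalgebra structures obtained from their natural (strict) coalgebra structures; since $\mathcal{C}(\g)\longrightarrow BS(\g)$ is a quasi-isomorphism of dgc coalgebras, homotopy transfer along any contraction to their common cohomology produces an isomorphism of the resulting minimal $A_\infty$-coalgebras.

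For (3), I would point out that an $L_\infty$-module over $\g$ is the same datum as a dg module over the enveloping $A_\infty$-algebra $U(\g)$; more precisely, $\g$-modules are equivalent to $\Omega\mathcal{C}(\g)$-modules, and the quasi-isomorphism $\Omega\mathcal{C}(\g)\longrightarrow U(\g)$ implicit in Theorem~\ref{thm:QM} induces a Quillen equivalence between their module categories. For (4), I would apply the Keller--Lefèvre formalism of twisted tensor products: for an acyclic twisting cochain $\tau:C\longrightarrow A$ between a conilpotent dgc coalgebra and an $A_\infty$-algebra, the adjunction $(\;\?\otimes_\tau A,\;\?\otimes_\tau C)$ descends to a pair of mutually inverse triangulated equivalences $\mathsf{D}(C)\leftrightarrows \mathsf{D}(A)$. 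The unit and counit are computed, up to quasi-isomorphism, by the Koszul complex $\mathcal{C}(\g)\otimes_\tau S(\g)$ of item (1), whose acyclicity is exactly what is needed.

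The main obstacle I anticipate is purely bookkeeping: making sure the standard Koszul--Morita machinery applies in the $A_\infty$-algebra setting (since $S(\g)$ is only $A_\infty$, not strictly associative) and tracking the minimality hypothesis where it is genuinely needed, namely in the construction of the acyclic cofibration $\mathcal{C}(\g)\longrightarrow BS(\g)$ that allows us to replace $BU(\g)$ by $BS(\g)$. Everything else is a matter of quoting the appropriate reference, for instance~\cite{LV} for twisting cochains and~\cite{Bar}*{Theorem 2} for the analogous statements in the dg-Lie case.
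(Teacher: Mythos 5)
Your proposal is correct and follows essentially the same route as the paper, which dispatches the corollary in one line by declaring that all four claims follow from the general theory of twisting cochains applied to the acyclic twisting cochain of Theorem~\ref{thm:QM}, citing \cite{Bar}*{Theorem 2} for the analogous dg-Lie statements. Your elaboration of which standard consequence of acyclicity yields each item (Koszul complex for (1), homotopy transfer for (2), the quasi-isomorphism $\Omega\mathcal{C}(\g)\longrightarrow U(\g)$ for (3), and the Keller--Lef\`evre equivalences for (4)) is exactly the intended, if unwritten, content of the paper's argument.
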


\subsection{The universal envelope as a functor on the homotopy category}

Suppose that $f :\mathcal{C}(\g)\longrightarrow\mathcal{C}(\g')$ is an $L_\infty$-morphism. The
coalgebra 
$BU(\g)$ is fibrant and the Quillen map of
$\g'$ is
an acyclic cofibration, so we can produce a lift
$U(f): BU(\g)\longrightarrow BU(\g')$. The following lemma implies this assignment has
good homotopical properties; in particular,
it is well defined up to homotopy. 

\begin{lemma}
Let $\varphi,\varphi' : BU(\g)\longrightarrow BU(\g')$ be such that $\varphi q = \varphi' q=q'f$.  Then $\varphi\simeq\varphi'$ as maps of 
dgc coalgebras. In particular, for a second
map $g :\mathcal{C}(\g')\longrightarrow\mathcal{C}(\g'')$, we have that $U(g)U(f) \simeq U(gf)$
for any choice of lifts of $f$, $g$ and $gf$. 
\end{lemma}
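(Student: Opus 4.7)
The plan is to apply the standard model-categorical argument for uniqueness of lifts. By Theorem~\ref{thm:QM}, the map $q:\mathcal{C}(\g)\longrightarrow BU(\g)$ is an acyclic cofibration, and $BU(\g')$ is fibrant: as the cofree coalgebra $T^c(sU(\g'))$ it is quasi-free, hence fibrant in the model structure on dgc coalgebras recalled before Theorem~\ref{thm:QM}. The hypothesis $\varphi q=\varphi' q=q'f$ says exactly that $\varphi$ and $\varphi'$ agree after precomposition with the trivial cofibration $q$, so in any model category they should be right-homotopic.

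To make this concrete, I would first construct a path object for $BU(\g')$ by factoring the diagonal $BU(\g')\longrightarrow BU(\g')\times BU(\g')$ as a weak equivalence $\delta:BU(\g')\longrightarrow (BU(\g'))^I$ followed by a fibration $(p_0,p_1):(BU(\g'))^I\longrightarrow BU(\g')\times BU(\g')$. One then considers the square with top arrow $\delta q'f:\mathcal{C}(\g)\longrightarrow (BU(\g'))^I$, bottom arrow $(\varphi,\varphi'):BU(\g)\longrightarrow BU(\g')\times BU(\g')$, left arrow $q$, and right arrow $(p_0,p_1)$; it commutes precisely because $\varphi q=\varphi' q=q'f$. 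Applying the lifting property of the trivial cofibration $q$ against the fibration $(p_0,p_1)$ yields a map $h:BU(\g)\longrightarrow (BU(\g'))^I$ exhibiting a right homotopy from $\varphi$ to $\varphi'$.

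For the ``in particular'' clause, I would simply observe that
\[ U(g)U(f)\,q \;=\; U(g)\,q'f \;=\; q''\,gf \;=\; U(gf)\,q, \]
using that $U(f)$ was chosen as a lift of $q'f$ along $q$, that $U(g)$ was chosen as a lift of $q''g$ along $q'$, and that $U(gf)$ was chosen as a lift of $q''gf$ along $q$. Hence $U(g)U(f)$ and $U(gf)$ are two lifts of the same map along the trivial cofibration $q$, and the first part of the lemma forces them to be homotopic as maps of dgc coalgebras.

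The main obstacle I anticipate is the existence of a suitable path object in Vallette's model category of dgc coalgebras: while abstractly the factorization axiom provides one, an explicit construction (for instance via a polynomial-forms $\Lambda(t,dt)$ construction adapted to coalgebras) would require some bookkeeping. Once Theorem~\ref{thm:QM} together with the fibrancy of bar constructions is granted, however, the argument is entirely formal, and the content of the lemma reduces to the generic statement that lifts of a fixed map along an acyclic cofibration into a fibrant object are unique up to homotopy.
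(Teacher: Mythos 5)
Your argument is correct and is essentially the paper's own: both reduce the lemma to the standard model-categorical fact that lifts along the acyclic cofibration $q$ into the fibrant object $BU(\g')$ are unique up to homotopy, the only difference being that the paper cites the relevant statements from Hovey (on $q^*$ inducing a bijection on homotopy classes) while you unwind their proof via an explicit path object and lifting square. The verification of the ``in particular'' clause via $U(g)U(f)\,q = q''\,gf = U(gf)\,q$ is exactly what the paper leaves implicit.
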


\begin{proof}
The reader can consult~\cite{Hov}*{Sections 1.1-1.2}
for details on the elements of model categories
used in this proof. We remind the reader $\Cog$
is the model category of coalgebras where the
cofibrations are the injections and the
weak equivalences are created by the cobar
functor.

The map $q$ is a weak equivalence between
cofibrant objects and $BU(\g')$ is fibrant. 
It follows from Lemma 1.1.12 that we have an induced isomorphism
\[
 q^*: \Cog(BU(\g),BU(\g'))/\simeq_r 
 	\longrightarrow 
 	\Cog(\mathcal{C}(\g),BU(\g'))/\simeq_r\] Since all objects in $\Cog$ are
 	cofibrant and $B(-)$ has image in fibrant objects, we deduce from Proposition 1.2.5 (v)
 	that we can replace the right homotopy relation
 	$\simeq_r$ unambiguously by the homotopy relation $\simeq$, and finally  Theorem 1.2.10 (ii) implies that
 	this isomorphism identifies naturally
 	with an isomorphism 
 	\[
 q^*: [BU(\g),BU(\g')]
 	\longrightarrow 
 	[\mathcal{C}(\g),BU(\g')]\]
 	which implies that if $q^*(\varphi) =q^*(\varphi')$ then these two maps must be homotopic.\end{proof}

With this at hand we have the following result.

\begin{theorem}
The universal envelope preserves weak equivalences
of $\,L_\infty$-algebras, so it descends to a functor
on the homotopy category. Moreover, if for a minimal $L_\infty$-algebra $\g$ we identify $S(\g)$ with
the universal envelope of the Lie algebra
$(\g,l_2)$, then
\begin{tenumerate}
\item there exists
 a minimal $A_\infty$-algebra structure on $S(\g)$
for which the inclusion $\g\longrightarrow S(\g)^\circ$ is
a strict map of $L_\infty$-algebras and, moreover,
\item any such $A_\infty$-structure is $A_\infty$-isomorphic to this
one, so that the universal envelopes of Baranovsky and Moreno-Fernández
are $A_\infty$-isomorphic and $A_\infty$-quasi-isomorphic to $U(\g)$.
\end{tenumerate}
\end{theorem}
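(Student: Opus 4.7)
The plan is to split the argument into three parts, matching the three claims of the theorem. For the first (descent to the homotopy category), I would combine the preceding lemma with Theorem~\ref{thm:QM}. Given an $L_\infty$-morphism $f:\g\to\g'$, interpret it as a dgc coalgebra map $\mathcal C(\g)\to\mathcal C(\g')$ and compose with the Quillen map $q':\mathcal C(\g')\to BU(\g')$. Since $q:\mathcal C(\g)\to BU(\g)$ is an acyclic cofibration and $BU(\g')$ is fibrant, we obtain a lift $U(f):BU(\g)\to BU(\g')$, unique up to homotopy and functorial by the lemma. When $f$ is a weak equivalence, two-out-of-three forces $U(f)$ to be a weak equivalence in $\Cog$, which translates to an $A_\infty$-quasi-isomorphism $U(\g)\to U(\g')$.

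For part (1), the claim is that $U(\g)$ is itself the desired minimal $A_\infty$-algebra. By Theorem~\ref{thm:main} we have $HU(\g)\cong S(H(\g))=S(\g)$, and since $\g$ is minimal, the induced $m_1$ on $U(\g)$ vanishes: it is built from $l_1=0$ via the $A_\infty$-relations. Hence $U(\g)$ is minimal, which forces $U(\g)=HU(\g)\cong S(\g)$ as graded vector spaces. The identification of $S(\g)$ with the enveloping algebra of the Lie algebra $(\g,l_2)$ matches this derived PBW isomorphism by naturality with respect to the strict morphism $(\g,l_2)\to\g$. The inclusion $\g\to U(\g)^\circ$ is then strict by the universal property of $U$ as a left adjoint.

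For part (2), let $A$ be another minimal $A_\infty$-structure on $S(\g)$ with strict inclusion $\iota:\g\to A^\circ$. Adjunction produces an $A_\infty$-morphism $\varphi:U(\g)\to A$. I would pass to the bar side: $\iota$ determines, exactly as in the proof of Theorem~\ref{thm:QM}, a dgc coalgebra map $\rho:\mathcal C(\g)\to BA$ whose cobar $\Omega\mathcal C(\g)\to A$ I would argue to be a quasi-isomorphism by filtering both sides by symmetric powers and applying Baranovsky's argument on the associated graded. Hence $\rho$ is an acyclic cofibration in $\Cog$. Lifting against the Quillen acyclic cofibration $q$ yields $BU(\g)\to BA$, a weak equivalence by two-out-of-three; translated back, $\varphi$ is an $A_\infty$-quasi-isomorphism between minimal $A_\infty$-algebras, and therefore an $A_\infty$-isomorphism. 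Applied to the constructions of Baranovsky and Moreno-Fern\'andez, this shows both are $A_\infty$-isomorphic to $U(\g)$ and to each other.

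The main obstacle is precisely the verification that $\Omega\mathcal C(\g)\to A$ is a quasi-isomorphism for an arbitrary $A$ as in the hypothesis: the monomorphism part of $\rho$ is routine, but the weak equivalence statement requires extending Baranovsky's perturbation argument beyond the case $A=\Omega\mathcal C(\g')$ of the abelianisation. Concretely, one must filter $A$ by the symmetric-power filtration inherited from $S(\g)$, use strictness of $\iota$ and minimality of $A$ to show that the associated graded product agrees with the commutative symmetric product on $S(\g)$, and then compare to Baranovsky's computation in the abelian case. Everything else in the argument is a formal consequence of the model-theoretic machinery and the derived PBW theorem proved in Theorem~\ref{thm:main}.
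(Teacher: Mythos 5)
There is a genuine gap in your part (1). You assert that $U(\g)$ is itself minimal because its $m_1$ ``is built from $l_1=0$ via the $A_\infty$-relations''. This is false: $U(\g)=\hAss\circ_{\hLie}\g$ carries the internal differential induced by the differential of the dg operad $\hAss$, which is nonzero even on the free $A_\infty$-algebra over a complex with zero differential (it sends, for instance, $m_3$ to the associator of $m_2$). So for minimal $\g$ the envelope $U(\g)$ is a large dg algebra, not equal to its homology, and the identification $U(\g)=S(\g)$ you draw from this does not hold --- indeed, if it did, the spectral-sequence argument behind the derived PBW theorem would be vacuous. The existence claim is instead obtained by the homotopy transfer theorem: one contracts $U(\g)$ (equivalently $\Omega\mathcal{C}(\g)$) onto its homology $S(\g)$ and transfers the $A_\infty$-structure, arranging the contraction so that the inclusion of $\g$ remains strict; this is exactly the content of the Baranovsky and Moreno-Fern\'andez constructions.

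For part (2) your overall shape (adjunction gives $\varphi\colon U(\g)\to A$; show $\varphi$ is a quasi-isomorphism; conclude that two minimal models of $U(\g)$ are $A_\infty$-isomorphic) agrees with the paper, but you leave the crucial step --- that $\Omega\mathcal{C}(\g)\to A$ is a quasi-isomorphism for an arbitrary $A$ as in the hypothesis --- as an admitted ``main obstacle'' to be settled by extending Baranovsky's perturbation argument. No such extension is needed: this is precisely where the derived PBW property of $\hLie\longrightarrow\hAss$ (Theorem~\ref{thm:main} together with the main theorem of Section 1) does the work. It identifies $HU(\g)$ naturally with the classical universal envelope of $(\g,l_2)$, that is, with $S(\g)$; the induced map $H(\varphi)\colon HU(\g)\to H(A)=A=S(\g)$ is then a morphism of associative algebras restricting to the identity on the generating subspace $\g$, hence surjective, and an isomorphism since both sides have the same graded dimensions by the classical PBW theorem. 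This closes the gap you flag, and makes the symmetric-power filtration of $A$ you propose unnecessary.
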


\begin{proof}
The claim about weak equivalences is immediate since the Quillen
map is a weak equivalence. The first claim follows from the homotopy
transfer theorem, and the second one follows
from the universal property of the enveloping
algebra and our main result. Indeed, suppose that $S(\g)$
is endowed with an $A_\infty$-algebra structure as in the statement of the theorem. The strict map of $L_\infty$-algebras $\g\longrightarrow S(\g)$ gives us an $A_\infty$-quasi-isomorphism  
$BU(\g) \longrightarrow BS(\g)$, for
the map $U(\g)\longrightarrow S(\g)$ induces an
isomorphism of algebras: we have shown that $H(U(\g))$ is naturally identified  with the universal 
envelope of the Lie algebra $(\g,b_2)$, and so
does $S(\g)$ in Baranosvky's construction.
\end{proof}

\noindent 
Let us recall that one can rectify every
$L_\infty$-algebra $\g$ to a bona-fide 
dg Lie algebra. In this way, one can show that
the homotopy category of dg Lie algebras
$\chLie$ and the
category of $L_\infty$-algebras up to 
quasi-isomorphism are equivalent. An
analogous result holds for dga algebras. Hence, we can view the universal envelope
functor ${}_{\hLie}\mathsf{Alg}\longrightarrow
{}_{\hAss}\mathsf{Alg}$ as a choice for a 
representative of the functor that assigns to a dg Lie 
algebra its universal envelope.
A restatement of our results is the following.

\begin{cor}
The constructions of Baranovsky, 
Lada--Markl
and Moreno-Fernández give representatives
for the universal envelope functor 
$\,\mathsf{ho}(U):\chLie \longrightarrow \chAss$.
Moreover, if $F : {}_{\hLie}\mathsf{Alg}\longrightarrow
{}_{\hAss}\mathsf{Alg}$ 
satisfies the conditions:
\begin{tenumerate}
\item the $A_\infty$-algebra $F(\g)$ is minimal whenever the $L_\infty$ algebra $\g$ is minimal, 
\item the underlying vector space to $F(\g)$ is the symmetric algebra $S(\g)$ and,
\item the higher products of
$F(\g)$ induce the higher brackets on $\g\subseteq S(\g)$,
\end{tenumerate} 
then $F$ descends to the homotopy category
and $\mathsf{ho}(F) = \mathsf{ho}(U)$.  
 \qed
\end{cor}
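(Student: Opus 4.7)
The first statement is a formal consequence of the second: each of the Lada--Markl construction $U$, Baranovsky's construction, and the Moreno-Fern\'andez construction defines a functor $F$ satisfying conditions (1)--(3). For $U$ this is the content of the preceding theorem, while for the other two it is built into their definitions as recalled in the discussion opening Section~\ref{Sec2}. So I would focus attention on the ``moreover'' clause.

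\textbf{Reduction to minimal algebras and comparison.} For the ``moreover'' clause, I would exploit the rectification/minimal model theorem: every $L_\infty$-algebra is $L_\infty$-quasi-isomorphic to a minimal one, and two minimal $L_\infty$-algebras are $L_\infty$-quasi-isomorphic if and only if they are strictly $L_\infty$-isomorphic. The analogous statement holds for minimal $A_\infty$-algebras on the target side, so it suffices to compare $F$ and $U$ on minimal inputs. Given such a minimal $\g$, conditions (1)--(3) say exactly that $F(\g)$ is a minimal $A_\infty$-algebra on the vector space $S(\g)$ whose antisymmetrized operations recover the brackets of $\g$. The preceding theorem then produces an $A_\infty$-quasi-isomorphism $F(\g)\longrightarrow U(\g)$, and minimality on both sides promotes it to an $A_\infty$-isomorphism. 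Since the same theorem already shows that $U$ descends to the homotopy category, transporting this comparison along minimal models for arbitrary $L_\infty$-algebras shows that $F$ also preserves weak equivalences and that $\mathsf{ho}(F)$ coincides with $\mathsf{ho}(U)$.

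\textbf{Main obstacle.} The principal technical subtlety will be upgrading these pointwise $A_\infty$-isomorphisms into a natural isomorphism in the homotopy category, so that the identity $\mathsf{ho}(F)=\mathsf{ho}(U)$ holds on morphisms and not merely on objects. I would extract naturality from the common universal property: each of $F(\g)$ and $U(\g)$ comes equipped with a strict $L_\infty$-inclusion $\g\hookrightarrow (-)^\circ$ which is the identity on the underlying graded vector space $S(\g)$. A rigidity argument, analogous to the one used in the preceding theorem, shows that a minimal $A_\infty$-structure on $S(\g)$ with prescribed antisymmetrization is pinned down up to $A_\infty$-isomorphism fixing $\g$; this yields naturality with respect to strict $L_\infty$-morphisms, and the general case follows by passing to minimal models and invoking functoriality of the minimal model construction in the homotopy category.
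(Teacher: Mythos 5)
Your proposal is correct and follows essentially the same route as the paper, which states this corollary without proof as a direct restatement of the preceding theorem: the three constructions all satisfy conditions (1)--(3), and part (2) of that theorem (rigidity of the minimal $A_\infty$-structure on $S(\g)$ with prescribed antisymmetrization) together with the lifting lemma for maps into $BU(\g')$ handles both the object-level comparison and functoriality on the homotopy category. One small caveat: quasi-isomorphic minimal $L_\infty$-algebras are $L_\infty$-isomorphic, but not in general \emph{strictly} so; this does not affect your argument, which only needs the former.
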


\subsection{Cohomology groups}

Recall that if $P$ is an operad and $A$ is a
$P$-algebra, operadic cohomology of $A$ is,
by definition, the cohomology of the complex
of $P$-derivations $\Der(B,A)$ where $B$
is a cofibrant resolution of $A$ in the model
category of $P$-algebras. This complex is
quasi-isomorphic to the dg Lie algebra $\Der(B)$
of derivations of $B$ to itself.
 More generally, the
cohomology of $A$ with values in an operadic
$A$-module $M$ is, by definition, the cohomology
of the complex of $P$-derivations $\Der(B,M)$
where $M$ is given a $B$-module structure 
through the map $B\longrightarrow A$. We
refer the reader to~\cite{LV,Fresse} for
details.

\pagebreak

Let us fix a minimal $L_\infty$-algebra $\g$, 
a minimal $A_\infty$-algebra $S(\g)$ that models 
$U(\g)$, and an acyclic twisting cochain 
$\mathcal{C}(\g)\longrightarrow S(\g)$. Observe 
that then $\Omega \mathcal{C}(\g)$ is a 
quasi-free model and hence a cofibrant replacement for $S(\g)$,
so that the operadic cohomology $H^*_{\mathsf{Ass}}(S(\g))$ can be computed through the complex of
derivations of the dg-algebra $\Omega\mathcal{C}(\g)$ with values in $S(\g)$. This 
receives
a map from the complex of derivations $\Der(\Omega\mathcal{C}(\g),\g)$ through
the strict map of $L_\infty$-algebras $\g\longrightarrow S(\g)$. We then obtain the following
result, which is expected.

\begin{theorem}
The maps above induce an isomorphism $H^*_{\mathsf{Ass}}(S(\g))
\longrightarrow H^*_{\mathsf{Lie}}(\g,S(\g))$ and
an injection $H^*_{\mathsf{Lie}}(\g)
\longrightarrow H^*_{\mathsf{Lie}}(\g,S(\g))$ in
operadic cohomology groups. \qed
\end{theorem}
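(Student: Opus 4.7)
The plan is to recognise each of the three cohomology groups as the cohomology of a $\mathrm{Hom}$-complex out of the Chevalley--Eilenberg coalgebra $\mathcal{C}(\g)$, and then handle the two claims separately.

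For the first isomorphism, I would exploit that $\Omega\mathcal{C}(\g)\longrightarrow S(\g)$ is a quasi-free resolution of $S(\g)$ as a dg associative algebra. Since $\Omega\mathcal{C}(\g)$ is free as a graded algebra on $s^{-1}\overline{\mathcal{C}}(\g)$, an associative derivation into $S(\g)$ is freely determined by its restriction to generators, yielding $\Der(\Omega\mathcal{C}(\g),S(\g))\cong \mathrm{Hom}(\mathcal{C}(\g),S(\g))$ up to the canonical suspension. Then I would trace the induced differential: it combines the cobar differential on $\Omega\mathcal{C}(\g)$, which records the $L_\infty$-brackets of $\g$, with the bimodule twisting by the canonical twisting cochain $\tau:\mathcal{C}(\g)\longrightarrow S(\g)$ of Theorem \ref{thm:QM}. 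Since the Maurer--Cartan equation satisfied by $\tau$ encodes exactly that the higher products in $S(\g)$ antisymmetrise to the higher brackets of $\g$, this agrees with the Chevalley--Eilenberg differential on $\mathrm{Hom}(\mathcal{C}(\g),S(\g))$ computing $H^*_{\mathsf{Lie}}(\g,S(\g))$, where $S(\g)$ is a $\g$-module via the strict inclusion $\g\hookrightarrow S(\g)^\circ$.

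For the injection, the inclusion $\g\hookrightarrow S(\g)$ induces a patently injective map of Chevalley--Eilenberg complexes $\mathrm{Hom}(\mathcal{C}(\g),\g)\hookrightarrow\mathrm{Hom}(\mathcal{C}(\g),S(\g))$, and the goal is to promote this to an injection on cohomology. The plan is to exhibit a chain retraction arising from a $\g$-module retraction $\pi:S(\g)\longrightarrow\g$. By the corollary to Theorem \ref{thm:main}, we are free to choose any convenient $A_\infty$-model for $S(\g)$ within its $A_\infty$-quasi-isomorphism class, so I would pick one in which the linear projection onto the summand $S^1(\g)=\g$ is a strict morphism of $L_\infty$-modules over $\g$. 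The existence of such a model is modelled on the classical Cartan--Eilenberg symmetrisation isomorphism $S(\g)\cong U(\g)$, which via the Eulerian idempotents decomposes $U(\g)$ into the $\g$-submodule summands $S^k(\g)$ under the adjoint action, with $\g=S^1(\g)$ a direct summand; the retraction then yields a splitting of Chevalley--Eilenberg complexes as chain complexes, from which injectivity on cohomology is immediate.

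The main obstacle is verifying that an $A_\infty$-structure on $S(\g)$ compatible with the Eulerian-type decomposition can actually be produced in the genuinely $L_\infty$ setting, so that each $S^k(\g)$ remains an $L_\infty$-submodule over $\g$. I expect this to follow either by a careful refinement of the homotopy transfer used to construct $S(\g)$ so that it preserves the Eulerian-idempotent decomposition of the tensor algebra, or by a spectral sequence argument with the symmetric-degree filtration on $S(\g)$, whose associated graded recovers the classical Cartan--Eilenberg decomposition and which converges on the level of cohomology to the desired injection.
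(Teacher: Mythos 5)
Your first half is exactly the argument the paper intends (the theorem is stated there with a tombstone and no written proof beyond the preceding paragraph): since $\Omega\mathcal{C}(\g)$ is a quasi-free, hence cofibrant, model of $S(\g)$, a derivation out of it is determined by its restriction to the generators $s^{-1}\overline{\mathcal{C}}(\g)$, and the transported differential on $\mathrm{Hom}(\overline{\mathcal{C}}(\g),S(\g))$ is the Chevalley--Eilenberg differential twisted by $\tau$, precisely because the Maurer--Cartan equation for $\tau$ records that the antisymmetrised higher products restrict to the higher brackets. That part is correct and matches the paper.

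The second half is where your proof is genuinely incomplete, and you have put your finger on the right obstacle without removing it. To obtain a chain retraction of $\mathrm{Hom}(\mathcal{C}(\g),\g)\hookrightarrow\mathrm{Hom}(\mathcal{C}(\g),S(\g))$ you need the projection $S(\g)\longrightarrow S^1(\g)=\g$ to commute with the \emph{full} twisted differential, that is, with all higher module operations $x\mapsto m_t(\tau,\dots,\tau,x,\tau,\dots,\tau)\Delta^{(t)}$, not only with the binary commutator. The binary part does preserve the Eulerian decomposition $S(\g)=\bigoplus_k S^k(\g)$ --- that is the classical Cartan--Eilenberg statement --- but for $t\geq 3$ there is no a priori control of how $m_t$ interacts with symmetric word-length: minimality constrains homological degree, not symmetric degree. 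Your first proposed fix (a transfer compatible with the Eulerian idempotents) is exactly what is needed, but it is asserted rather than proved. Your second fix (filter by symmetric degree) only identifies the map on the associated graded, i.e.\ on the $E_1$-page, where injectivity is the classical statement; an injection on $E_1$ does not descend to an injection on the abutment unless you also rule out later differentials entering or leaving the $S^1$-column, and you give no argument for this. So as written the injection is not established; a self-contained proof must close this point, for instance by checking that in Baranovsky's perturbative construction the higher products respect the PBW filtration in a way that exhibits $\g$ as a direct summand of $S(\g)$ as a homotopy $\g$-module. (In fairness, the paper itself supplies only the construction of the two maps and no argument for injectivity.)
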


\subsection{Derived PBW theorems for associative envelopes}

To each symmetric operad $P$ one can assign an 
\new{associative universal enveloping functor} 
$U_P : {}_P\Alg\longrightarrow
\Alg$ from the category of  $P${-algebras} to the category of associative algebras, see~\cite{GK} and \cite{Khor::UP}*{Section 1} for details. The associative algebra $U_P(V)$ associated with a $P$-algebra $V$ satisfies the universal property that the category of left $U_P(V)$-modules is equivalent to the category of left modules over a $P$-algebra $V$. The first
author explains in detail in~\cite{Khor::UP}*{Section 1.2} that one can also interpret this
associative universal envelope as a left adjoint
to a restriction functor, so that one may study it
using the formalism of~\cite{PBW}.

Indeed, the universal enveloping functor $U_P$ is obtained from the left adjoint $f_{!}$ corresponding to the restriction functor for the map of colored operads
$f:(P,\Bbbk) \to (P,\partial P)$.
Here $(P,\Bbbk)$ is the two-colored operad 
that governs pairs of the form $(V,M)$ where $V$ is a $P$-algebra and $M$ is a vector space,
while $(P,\partial P)$ is the two-colored
operad that governs pairs of the form $(V,M)$
where $V$ is a $P$-algebra and $M$ is a left $V$-module. Recall that $\partial P$ is the \new{derivative}
of the symmetric sequence $P$, and it is
obtained from $P$ by adding an extra color 
to the output and one of the inputs of each 
operation in $P$. With this at hand, one can
check that $f_!(V,\Bbbk) = (V, U_P(V))$. 

The fact that $U_P(V)$ is an associative
algebra (and not merely a vector space) 
comes from an additional structure
on $\partial P$: although this collection is no longer an
operad in an obvious way, it is a \new{twisted associative
algebra} ---that is, an associative algebra
for the Cauchy product in symmetric sequences---
where the product is given by grating
the root on the unique new colored input. In this
way $U_P(V)$ inherits an associative
algebra structure, and one can check that the
datum of a left $U_P(V)$-modules is the same 
as that of a left $V$-module. 

The first author has shown in~\cite{Khor::UP} that the functor $U_P(-)$ satisfies the PBW property whenever $P$ admits a Gr\"obner basis whose leading monomials are given by left combs.
Recall that a shuffle monomial is called a left comb if and only if all its inner vertices belong to the leftmost branch of a shuffle tree, that is, the path connecting the first input and the output.
Another criterion was proposed in~\cite{Khor::UP} 
in case $P$ is a \emph{Koszul operad}: 
\[\claim{the functor $U_P$ satisfies the PBW property if and only if the twisted associative algebra $\partial P^{!}$ is quadratic, Koszul and generated by $\partial X$ as a twisted associative algebra.} \]
Here $X$ is the generating symmetric sequence of 
the symmetric operad $P$. We claim that the same conditions are sufficient in the derived setting;
we still assume that $P$ is Koszul.

\begin{theorem}
\label{thm::UAss}
If the associative universal envelope $U_P$ satisfies the PBW property then the corresponding derived associative universal envelope $U_{P_\infty}$ satisfies the derived PBW property.
In particular, if an operad $P$ admits a quadratic Gr\"obner basis whose leading monomials are given by left combs then the universal enveloping functor $U_{P_\infty}$ satisfies derived PBW.
\end{theorem}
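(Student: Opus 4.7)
The plan is to apply Theorem~1.1 to the map of two-colored dg operads $f_\infty : (P_\infty,\Bbbk) \longrightarrow (P_\infty, \partial P_\infty)$ whose left adjoint is, by construction, the derived associative enveloping functor $U_{P_\infty}$; once $f_\infty$ is shown to be almost-free, Theorem~1.1 immediately yields derived PBW. The second sentence of the theorem then follows at once by combining the first with~\cite{Khor::UP}, where it is established that a quadratic Gr\"obner basis with left-comb leading monomials forces $U_P$ to satisfy the classical PBW property.

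The first ingredient is the hypothesis that $U_P$ is PBW: by the main theorem of~\cite{PBW} this means that $(P,\partial P)$ is a free right $(P,\Bbbk)$-module on some $\Sigma$-module $F$. Under the Koszul assumption on $P$, the criterion of~\cite{Khor::UP} identifies $F$ concretely via the Koszul quadratic twisted associative algebra $\partial P^{!}$, and moreover provides a distributive-law-type presentation of $(P,\partial P)$ whose underlying $\Sigma$-module decomposes as $F\circ P$. Following the blueprint of the proof of Theorem~2.1, I would then invoke the Appendix to build a quasi-free resolution of the colored operad $(P,\partial P)$ obtained from this distributive-law associated graded by a perturbation lowering a natural weight grading. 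Since by Koszulness the underlying $\Sigma$-module of this resolution coincides with that of the minimal model $(P_\infty,\partial P_\infty)$, the resolution is forced to be minimal and isomorphic to $(P_\infty, \partial P_\infty)$.

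Finally, the weight grading endows $(P_\infty,\partial P_\infty)$ with a bounded below exhaustive filtration by right $(P_\infty,\Bbbk)$-submodules whose associated graded carries a canonical surjection onto the free right module $F\circ P_\infty$. Exactly as in the proof of Theorem~2.1, I would then produce in characteristic zero an equivariant splitting of this surjection together with a contracting homotopy, so that the projection is a chain equivalence of right $(P_\infty,\Bbbk)$-modules. This exhibits $f_\infty$ as almost-free, and Theorem~1.1 completes the argument. The main obstacle I anticipate is the construction of the correct $\infty$-enhancement of the distributive law governing $\partial P$, analogous to the enhanced law $\lambda'$ between $\Com$ and $\hLie$ appearing in the proof of Theorem~2.1; one must verify that the higher operations of $P_\infty$ act as derivations of the twisted associative structure on $\partial P$, and this is where both the Koszulness of $\partial P^{!}$ and the left-comb hypothesis should enter in an essential way.
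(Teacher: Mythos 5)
Your reduction to showing that $f_\infty$ is almost-free, and your deduction of the second sentence from the left-comb criterion of~\cite{Khor::UP}, both match the paper. But the core of your argument has a genuine gap, and it is exactly the one you flag at the end: you never construct the weight-graded presentation of the colored operad $(P,\partial P)$ whose associated graded is governed by a distributive law, nor the $\infty$-enhancement of that law. The strategy of Theorem~2.1 depends on a very specific input --- the Livernet--Loday presentation of $\Ass$, with its canonical weight grading on the two generators, whose associated graded relations happen to be those of $\Pois=\Com\vee_\lambda\Lie$. For a general Koszul operad $P$ with the PBW property there is no analogous canonical presentation of $\partial P$ relative to $P$: the main theorem of~\cite{PBW} gives you freeness of $\partial P$ as a right module, i.e.\ a $\Sigma$-module decomposition $F\circ P$, but that is strictly weaker than a distributive-law presentation, and the Appendix cannot be invoked without a concrete weight grading on generators satisfying its hypotheses. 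As written, your second and third paragraphs rest on structure that is not known to exist.

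The paper's proof avoids distributive laws entirely and is worth contrasting with your plan. It uses the observation from~\cite{Khor::UP} that the derivative commutes with the cobar construction, $\partial(\Omega P^{\antishriek})\cong\Omega(\partial P^{\antishriek})$, so that $(P_\infty,\partial P_\infty)$ is itself a colored cobar construction whose elements are colored operadic trees. Filtering by the number of edges joining the new-colored branch to the rest of the tree produces a filtration of right $P_\infty$-modules whose associated graded is $\Omega_{\TwAs}(\partial P^{\antishriek})\circ P_\infty$ --- automatically free over $P_\infty$, with basis the twisted-associative cobar construction. The PBW hypothesis enters only through the fact that it forces $\partial P^{!}$ to be Koszul, which supplies a chain equivalence $(\partial P^{\antishriek})^{\antishriek}_{\TwAs}\to\Omega_{\TwAs}(\partial P^{\antishriek})$ onto a basis of cycles; almost-freeness follows. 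If you want to salvage your route, you would need to prove that the PBW property yields not just right-module freeness but an actual weight-graded presentation with distributive-law associated graded, which is an additional (and unestablished) claim; the edge-count filtration on the colored cobar construction gets you the same conclusion without it.
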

\begin{proof}
	Let $P_{\infty}:=\Omega P^{\antishriek}$ be the minimal cofibrant model of $P$ generated by the Koszul-dual cooperad $P^{\antishriek}$ and denote by $(\partial P^{\antishriek})^{\antishriek}_{\TwAs}$ the twisted associative algebra that is Koszul dual to the twisted associative algebra $\partial P^{\antishriek}$. 
	One of the main observations in~\cite{Khor::UP} is the commutation of the coloring procedure $P\mapsto \partial P$ and cobar constructions. So
that there is a natural isomorphism $\partial (\Omega P^{\antishriek}) \longrightarrow \Omega(\partial P^{\antishriek})$.
	Each element of a (colored) cobar construction of a (colored) (co)operad is represented by a (colored) operadic tree $T$. 
	The colored cobar construction of the colored operad $\partial P$ admits the PBW-filtration given by the number of edges connecting the branch colored in a new color  and the remaining part of an operadic tree $T$. The associated graded complex is quasi-isomorphic to the composition
	\[\Omega_{\TwAs}(\partial P^{\antishriek})\circ \Omega P^{\antishriek} \simeq 
	\Omega_{\TwAs}(\partial P^{\antishriek})\circ P_{\infty}.\]
	As shown in~\cite{Khor::UP} the Koszulness of the twisted associative algebra $\partial P^{!}$ is a necessary condition for $U_P$ to satisfy the ordinary PBW criterion. Hence the PBW property for $U_P$ implies the existence of a chain equivalence $s:  (\partial P^{\antishriek})^{\antishriek}_{\TwAs} \rightarrow   \Omega_{\TwAs}(\partial P^{\antishriek})$, which shows the map $\partial P_{\infty}$ is almost-free, and finishes our proof of the theorem.
\end{proof}

An interesting consequence of the above 
``PBW-rigidity'' phenomenon for associative universal 
envelopes is the following result, which shows
that to compute the derived associative
universal envelope of a $P$-algebra, one may
only resolve only one variable in the functor
$U_P(-)$. 

\begin{cor}
Let $P$ be Koszul. Given a $P_{\infty}$-algebra $A$ and a dg $P$-model 
$B$ of $A$, the corresponding associative universal 
envelopes are quasi-isomorphic, namely, there is 
always a quasi-isomorphism $U_P(B) 
{\longrightarrow} U_{P_\infty}(A)$ of dga algebras.
\qed\end{cor}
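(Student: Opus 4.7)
The plan is to produce the desired quasi-isomorphism by constructing a zigzag through $U_{P_\infty}(B)$, where on one side $B$ is regarded as a strict dg $P$-algebra and on the other as a $P_\infty$-algebra via the canonical operad map $g\colon P_\infty \to P$. Since $B$ is a $P$-model of $A$, there is a $P_\infty$-quasi-isomorphism $B \to A$. By the preceding theorem the functor $U_{P_\infty}$ is derived PBW and hence, by Corollary 1.1, preserves weak equivalences; applied to $B \to A$ this yields a quasi-isomorphism $U_{P_\infty}(B) \to U_{P_\infty}(A)$ of dga algebras, disposing of one leg of the zigzag immediately.

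The core step is to compare $U_{P_\infty}(B)$ with $U_P(B)$ when $B$ is strict. The natural comparison map $U_{P_\infty}(B) \to U_P(B)$ arises because every $P$-module on $B$ restricts to a $P_\infty$-module on $B$: under the equivalence between $V$-modules and left $U(V)$-modules, this restriction is induced by an algebra morphism in this direction. To prove it is a quasi-isomorphism, I would compute both sides in homology. The derived PBW property of $U_{P_\infty}$ gives a natural identification $H(U_{P_\infty}(B)) \cong U_P(H(B))$. For the target, the classical PBW hypothesis identifies $U_P$ on underlying chain complexes with a Schur functor $T$; because we work in characteristic zero, $T$ preserves quasi-isomorphisms, so $H(U_P(B)) \cong T(H(B)) = U_P(H(B))$. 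A naturality argument, tracing the comparison map through the almost-free filtration of $\partial P_\infty$ built in the proof of the preceding theorem, then shows the two identifications are intertwined by the comparison map, whence it is a quasi-isomorphism.

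Combining these two steps yields a zigzag of dga quasi-isomorphisms
\[ U_P(B) \;\stackrel{\sim}{\longleftarrow}\; U_{P_\infty}(B) \;\stackrel{\sim}{\longrightarrow}\; U_{P_\infty}(A), \]
which proves the corollary. The main obstacle is the naturality verification just sketched: one must check that, on the $E^\infty$-page of the spectral sequence induced by the weight filtration on $\partial P_\infty$, the comparison map reduces to the identity on $U_P(H(B))$. This is where the proof genuinely uses that the almost-free structure of $\partial P_\infty$ as a right $P_\infty$-module refines, through $g$, the classical PBW structure of $\partial P$ as a right $P$-module, a fact already implicit in the proof of the preceding theorem via the commutation $\partial(\Omega P^{\antishriek}) \cong \Omega(\partial P^{\antishriek})$.
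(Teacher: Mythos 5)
Your argument is correct and is, as far as one can tell, exactly the deduction the paper has in mind: the corollary is asserted with no written proof as an immediate consequence of Theorem~\ref{thm::UAss}, and your two legs --- $U_{P_\infty}(B)\to U_{P_\infty}(A)$ being a quasi-isomorphism by the derived PBW property together with Corollary 1.1, and the comparison $H(U_{P_\infty}(B))\cong U_P(H(B))\cong H(U_P(B))$ coming from the almost-free structure of $\partial P_\infty$ over $P_\infty$ on one side and the classical PBW (free right-module) structure of $\partial P$ over $P$ plus the K\"unneth theorem on the other, with the naturality check carried out on the weight filtration --- supply precisely the missing content. The one caveat is cosmetic rather than mathematical: the natural comparison map points from $U_{P_\infty}(B)$ to $U_P(B)$, so what one actually produces is your zigzag (equivalently, an isomorphism in the homotopy category of dga algebras) rather than the single displayed arrow $U_P(B)\longrightarrow U_{P_\infty}(A)$, and the arrow in the statement should be read in that weaker sense.
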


\section{Further directions}

\subsection{Derived universal envelopes}

Let us fix a map $f:P\longrightarrow Q$ of
dg operads over a field $\kk$. 
Although we focused on $P$-algebras ---left $P$-modules concentrated in arity $0$---, the
universal envelope defines a map
$f_!:{}_P\Mod \longrightarrow \Ch_\kk$, which
is given explicitly by $f_!(X) = Q\circ_P X$. 
According to 
\cite{Fresse}, the category $\Mod_P$ of right 
$P$-modules admits a cofibrantly 
generated model structure where fibrations
and weak-equivalences are defined point-wise.
In particular, we can consider a cofibrant
replacement $Q^*$ of $Q$, for example, the
two sided bar construction $B(Q,P,P)$, and define 
$\mathbb Lf_!(X) = Q^* \circ_P X$, which
gives us the object $\Tor^P(Q,X)$. Following
the procedure of~\cite{Guide}*{Chapter 7}, we
can produce a Eilenberg--Moore type spectral sequence which gives a fine tool to study derived PBW
phenomena; our arguments essentially consider
the situation when there is an immediate collapse
of this sequence due to $Q$ begin almost-free.
It would be interesting to consider situations
where certain restrictions on $Q$, other than
almost-freeness, allow us to obtain derived 
PBW theorems. 

\subsection{Duflo-type results for higher centres}

Although the canonical map $\alpha:S(\g)\longrightarrow
U(\g)$ is not a map of algebras (since the
source is commutative, but the target is not),
we can consider the adjoint action of $\g$
on both spaces. It is well known that the
map above is then one of $\g$-modules and thus
induces a map $\alpha^\g:S(\g)^\g\longrightarrow U(\g)^\g$, where $U(\g)^\g$ is just the center
of $U(\g)$, a commutative algebra. This map is,
however, not an isomorphism of algebras either. A remarkable result of M. Duflo shows that one can construct from this an isomorphism of
algebras 
\[ \alpha^\g\circ J^{1/2}:S(\g)^\g\longrightarrow U(\g)^\g,\]
known as the \new{Duflo isomorphism},
through a suitable (and quite involved) modification of this map through an automorphism $J$ of $S(\g)$. In fact, M. Pevzner and C. Torossian 
proved that the Duflo isomorphism is part of
an isomorphism of Lie cohomology groups 
\[ H^*(\g,S(\g))\longrightarrow H^*(\g,U(\g))\] 
induced from a quasi-isomorphism $C^*(\g,S(\g))
\longrightarrow C^*(\g,U(\g))$ between the
corresponding Chevalley--Eilenberg complexes,
following insight of 
M. Kontsevich.
We refer the reader to~\cite{Duflo} for details
and useful references.
In~\cite{VinBen} the authors define the $\infty$-centre of a minimal $A_\infty$-algebra, which
can be used, for example, to describe the image 
of the ``wrong way'' map 
\[ H_{*+d}(LX)\longrightarrow H_*(\Omega X)\] onto the Pontryagin algebra of a simply connected smooth
oriented $d$-manifold $X$: the image of this 
map is precisely the $\infty$-centre of 
$H_*(\Omega X)$. In particular we can consider, for any minimal
$L_\infty$-algebra, the $\infty$-centre $Z_\infty
S(\g)$, which is a commutative algebra. It would be interesting to understand
this higher centre and explore the possiblity
of extending the results of Duflo and 
Pevzner--Torossian  to this setting.

\appendix 
  \section[Models of operads via homological perturbation]{Models of operads via homological perturbation\footnote{by Vladimir Dotsenko}}
\label{ssec:perturbation}

This section records an instance of a general homological perturbation argument which allows one to obtain, in a range of cases, a resolution of a filtered object from the one of its associated graded object. A similar argument for a perturbative construction of a resolution of a shuffle operad with a Gr\"obner basis is featured in \cite{DotKho}*{Th.~4.1}. We keep the assumption on the characteristic of the ground field.

 
Let us consider a (non-dg) symmetric operad $Q=\mathcal{T}(X)/(R)$ generated by a finite dimensional $\Sigma$-module 
$X$ concentrated in arities greater than one, subject to a finite dimensional space of relations $R$. Suppose that the $\Sigma$-module $X$ is equipped with a non-negative weight grading, 
\[X=\bigoplus_{n\ge 0}X_{(n)}.\] This weight grading gives rise to a weight grading of the free operad $\mathcal{T}(X)$, and hence an increasing filtration $F^\bullet \mathcal{T}(X)$ such that $F^k\mathcal{T}(X)$ is spanned by all elements of weight at most~$k$. This filtration gives rise to a filtration on each $\Sigma$-submodule of $\mathcal{T}(X)$. In particular, we may consider the operad $P=\mathcal{T}(X) /(\gr_F R)$. In general, there is an isomorphism of $\Sigma$-modules between $Q$ and $\gr_F Q=\gr_F\mathcal{T}(X) / \gr_F(R)$, and a surjection of $\Sigma$-modules from $P$ onto $Q$.

\begin{theorem*}
Suppose that the underlying $\Sigma$-modules of the operads $P$ and $Q$ are isomorphic. Consider the minimal quasi-free resolution $P_\infty\longrightarrow P$ in the category of weight graded operads. There exists a quasi-free resolution $Q_\infty\longrightarrow Q$ whose underlying free operad is the same and the differential is obtained from the differential of $P_\infty$ by a perturbation that lowers the weight grading. 
\end{theorem*}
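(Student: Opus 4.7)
The plan is to apply the homological perturbation lemma at the level of bar complexes and then transfer the result back to the level of free operads via the cobar construction.

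First, I would realise $P_\infty$ as $\Omega C$, where $C$ is the generating dg cooperad of the minimal model, chosen as a $\Sigma$-equivariant sub-cooperad of the reduced bar complex $\overline{B} P$ that splits the projection onto $H(\overline{B} P)$. In characteristic zero, averaging over symmetric groups produces a $\Sigma$-equivariant inclusion $i\colon C \hookrightarrow \overline{B} P$, a $\Sigma$-equivariant projection $p\colon \overline{B} P \twoheadrightarrow C$, and a contracting homotopy $h\colon \overline{B} P \to \overline{B} P$ satisfying the usual side conditions $pi = \id$, $\id - ip = [d, h]$ and $h^2 = hi = ph = 0$. All three maps can be chosen to respect the weight filtration on $\overline{B} P$ induced by the weight filtration on $P$, since this filtration is exhaustive and bounded below.

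Next, I would encode the passage from $P$ to $Q$ as a filtration-lowering perturbation of the bar differential. Choosing a $\Sigma$-equivariant splitting of the filtration on $\mathcal{T}(X)$, the hypothesis that $P$ and $Q$ have isomorphic underlying $\Sigma$-modules yields an identification of the underlying $\Sigma$-modules of $\overline{B} Q$ and $\overline{B} P$. Under this identification the differential of $\overline{B} Q$ takes the form $d_{\overline{B} P} + \delta$, with $\delta$ strictly lowering the weight grading, since the relations of $Q$ differ from those of $P$ by lower-weight corrections.

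Third, I would apply the homological perturbation lemma to the contraction $(i, p, h)$ with perturbation $\delta$. Because $\delta$ strictly lowers weight and the filtration is bounded below, the composite $h\delta$ is locally nilpotent on each weight component, so the geometric series $\sum_{n \ge 0} (h\delta)^n$ converges termwise. This produces a perturbed differential $d_C + \tilde\delta$ on $C$, with $\tilde\delta = p\,\delta\,\bigl(\sum_{n \ge 0} (h\delta)^n\bigr)\,i$ strictly lowering the weight, together with perturbed transfer maps exhibiting $(C, d_C + \tilde\delta)$ as a deformation retract of $\overline{B} Q$. Applying the cobar construction then gives the desired quasi-free operad $Q_\infty$, whose underlying free operad is $\Omega C$ (the same as that of $P_\infty$) and whose differential is $d_{P_\infty}$ plus the unique derivation of $\Omega C$ extending $\tilde\delta$, a perturbation strictly lowering the weight; composing the cobar of the transfer map with the counit $\Omega \overline{B} Q \to Q$ yields the resolution map.

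The main obstacle will be verifying that $(C, d_C + \tilde\delta)$ carries a strict dg cooperad structure rather than only that of an $\infty$-cooperad, so that $\Omega C$ remains an honest operad. Here one needs an operadic refinement of the perturbation lemma: since $C$ sits inside $\overline{B} P$ as a genuine sub-cooperad and the contraction is compatible with the cooperadic decomposition coming from the filtration, the transferred comultiplications assemble into a strict cooperad structure. That $Q_\infty \to Q$ is a quasi-isomorphism can then be double-checked via the weight-filtration spectral sequence of $Q_\infty$: its $E^0$ page recovers $P_\infty \to P$, which is a quasi-isomorphism, and convergence (guaranteed by the filtration being bounded below and exhaustive) propagates this to the total complex.
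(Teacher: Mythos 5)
Your strategy (perturb the bar differential, transfer through a contraction onto the generators, then apply cobar) is genuinely different from the paper's, which never leaves the free operad $\mathcal{T}(W)$: there the perturbed differential $D$ and a contracting homotopy $H$ are built directly, generator by generator, by a double induction on homological degree and weight, using only minimality of $P_\infty$, a splitting $[d,h]=1-\bar{\pi}$, and leading-term bookkeeping ($D(x)=d(x)-HDd(x)$, $H(x)=h(\widehat{x})+H(x-Dh(\widehat{x}))$). Your reduction of the perturbation $\delta$ on $\overline{B}Q\cong\overline{B}P$ and the local nilpotence of $h\delta$ (weights are non-negative and $\delta$ strictly lowers them) are fine, as is the concluding spectral-sequence argument. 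But there is a genuine gap exactly where you locate the ``main obstacle,'' and your proposed resolution of it is not correct. For a general (non-Koszul) weight-graded operad $P$, the generators of the minimal model form only a \emph{homotopy} cooperad: the differential of $P_\infty=(\mathcal{T}(W),d)$ has components landing in trees with arbitrarily many vertices, so $P_\infty$ is not $\Omega C$ for a strict dg sub-cooperad $C\subseteq\overline{B}P$, and homotopy transfer along a contraction does \emph{not} in general produce a strict cooperad structure on $H(\overline{B}P)$ --- the higher decomposition maps are precisely the obstruction. Asserting that ``the transferred comultiplications assemble into a strict cooperad structure'' is false in general, and the theorem is stated (and used in the body of the paper, for $\Pois$ versus $\Ass$) without any Koszulness hypothesis that would make it true.

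A second, related defect: even in the setting where the homotopy-cooperad formalism is invoked correctly, the scalar homological perturbation lemma only produces the perturbed \emph{differential} $d_C+\tilde\delta$ on the complex $C$. The differential of the (co)bar construction $\Omega_\infty C$ also depends on all the decomposition maps of $C$, and these are perturbed by the transfer as well; consequently the differential of $Q_\infty$ is not ``$d_{P_\infty}$ plus the unique derivation extending $\tilde\delta$'' --- the quadratic and higher components of the differential acquire corrections too. The statement you are proving is still salvageable this way (every correction term contains at least one factor of $\delta$ and hence strictly lowers weight, so the total perturbation of $d_{P_\infty}$ is weight-lowering), but you would need to invoke the full homotopy transfer theorem for homotopy cooperads, not the basic perturbation lemma, and then verify weight-homogeneity of all transferred operations. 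The paper's inductive construction on $\mathcal{T}(W)$ sidesteps this entire layer of coalgebraic structure, which is why it is the argument of choice here.
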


\begin{proof} 
Assume that $P_\infty$ is of the form $(\mathcal{T}(W),d)$, where $W$ is an $\Sigma$-module that is bi-graded, by weight and by homological degree. We shall prove that there exists a quasi-free resolution $Q_\infty=(\mathcal{T}(W),d+d')$ of $Q$ so that $d'$ is strictly weight decreasing.

Since the resolution $P_\infty=(\mathcal{T}(W),d)$ is minimal, it follows in particular that $W_0= X$, $W_1 =\kk s\otimes\gr_F R$. The operad $\mathcal{T}(W)$ can be mapped to both the operad $P$ and the operad $Q$: one may project it onto its part of homological degree $0$, the latter is isomorphic to the free operad $\mathcal{T}(X)$ which admits obvious projection maps to $P$ and to $Q$. Let us choose splittings for those projections; this amounts to exhibiting two idempotent endomorphisms $\bar{\pi}$ and $\pi$ of $\mathcal{T}(W)$ such that both of which annihilate all elements of positive homological degree, and such that the former annihilates the ideal $(\gr_F R)\subset \mathcal{T}(X)=\mathcal{T}(W)_0$ and the latter annihilates the $(R)\subset \mathcal{T}(X)=\mathcal{T}(W)_0$.

Since $P$ is finitely generated and has no generators of arity $1$, components of the free operad $\mathcal{T}(W)$ are finite-dimensional, and there exists a weight graded homotopy $h\colon \mathcal{T}(W) \longrightarrow \mathcal{T}(W)$ such that $h^2=0$ and $[d,h] = 1 - \bar{\pi}$.

We are going to define a derivation $D\colon \mathcal{T}(W) \longrightarrow \mathcal{T}(W)$ of degree $-1$ and a contracting homotopy $H\colon \ker(D) \to \mathcal{T}(W)$ of degree $+1$. Note that a derivation is fully determined by the images of generators, and that $D|_{W_0}=0$ since $\mathcal{T}(W)$ has no elements of negative homological degree. For each element $x$ of $\mathcal{T}(W)$ of a certain homological degree, we call the ``leading term'' of $x$ the homogeneous part of $x$ of maximal possible weight grading; we denote it by $\widehat{x}$. 

We shall prove by induction on $k$ that one can define the values of $D$ on generators of homological degree $k+1$ and the values of $H$ on elements of $\ker(D)$ of homological degree $k$ so that the following five conditions hold:
\begin{tenumerate}
\item for all elements $x\in\mathcal{T}(W)$, the leading term of the difference $D(x)-d(x)$ is of weight lower than that of $x$,
\item we have $D^2=0$ on generators of homological degree $k+1$,
\item the leading term of the difference $H(x)-h(\widehat{x})$ is of weight lower than that of $x$,
\item we have $DH=1-\pi$ on elements of $\ker(D)$ of homological degree $k$.
\end{tenumerate}

As a basis of induction, we shall choose a basis of $W_1 = \kk s\otimes\gr_F R$, and set  
$D(s\otimes r')=r$
where $r$ is some element of $R$ for which $\widehat{r}=r'$. We note that $D(s r')-d(s r')=r-r'$ has smaller weight than $r'$, so Condition~(1) is satisfied. Condition~(2) is satisfied for degree reasons, as there are no elements of negative homological degree. Condition~(1) together with the fact that $\bar{\pi}=\pi$ on elements of weight zero implies that the leading term of $Dh(\widehat{x})$ is
 \[
dh(\widehat{x})=(1-\bar{\pi})(\widehat{x})=(1-\pi)(\widehat{x})=\widehat{x}-\pi(\widehat{x}),
 \]
and so we may define $H$ on elements of homological degree zero by induction on weight as follows. On elements $x$ of weight zero,  
we put $H(x)=h(x)$, and on elements $x$ of positive weight, we put
 \[
H(x)=h(\widehat{x})+H(x-\pi(x)-Dh(\widehat{x})).
 \]
Both Condition~(3) and Condition~(4) are proved by induction on weight. For former one, the inductive argument is almost trivial; we shall show how to prove the latter. On elements of weight zero, we have $H(x)=h(x)$ and $\bar{\pi}=\pi$, so Condition~(4) is true:
 \[
DH(x)=dh(x)=[d,h](x)=(1-\bar{\pi})(x)=(1-\pi)(x).
 \]
For elements of positive weight, we have, by induction, 
\begin{align*}
DH(x) &=Dh(\widehat{x})+DH(x-\pi(x)-Dh(\widehat{x})) \\
&=Dh(\widehat{x})+(1-\pi)((1-\pi)x-Dh(\widehat{x}))\\
&= (1-\pi)^2(x)+\pi(Dh(\widehat{x}))\\
&=(1-\pi)(x),
\end{align*} 
since $\pi$ vanishes on the image of $D=(R)$ and $1-\pi$ is a projector. To carry the inductive step, we proceed in a similar way. To define the image under $D$ of a generator of homological degree $k+1>1$, we put $D(x)=d(x)-HDd(x)$. 
Condition~(1) now easily follows by induction. For Condition~(2), we note that 
\begin{align*}
D^2(x)&=D(d(x)-HDd(x)) \\
	&=Dd(x)-DH(Dd(x))\\
    &=Dd(x)-(1-\pi)Dd(x)=\pi(Dd(x))=0,
\end{align*}
since $Dd(x)\in\ker(D)$, and $\pi$ vanishes on the image of $D$. From that, we see that whenever $x\in\ker(D)$, we have $x-Dh(\widehat{x})\in\ker(D)$. Using Condition~(1) and the fact that $\bar{\pi}$ vanishes on elements of positive homological degree, we see that the leading term of $Dh(\widehat{x})$ is $dh(\widehat{x})=(1-\bar{\pi})(\widehat{x})=\widehat{x}$, so the leading term of $x-Dh(\widehat{x})$ is of weight lower than that of $x$. Consequently, we may define $H$ on elements of $\ker(D)$ of homological degree $k>0$ by the same inductive argument: on elements $x$ of weight zero, we put $H(x)=h(x)$, and on elements $x$ of positive weight, we put
 \[
H(x)=h(\widehat{x})+H(x-Dh(\widehat{x})).
 \]
Once again, a simple inductive argument shows that Conditions~(3) and~(4) are satisfied, which
completes the construction of $D$ and $H$.  

We conclude that $D$ makes $\mathcal{T}(W)$ a dg operad, that the homology of that operad is isomorphic to~$Q$, and that differential $D$ is obtained from $d$ by a perturbation $d'$ that lowers the weight grading, as required.
\end{proof}

\begin{bibdiv}
\begin{biblist}
\bib{Adams}{article}{
   author={Adams, J. F.},
   title={On the non-existence of elements of Hopf invariant one},
   journal={Ann. of Math. (2)},
   volume={72},
   date={1960},
   pages={20--104},
   issn={0003-486X},
   review={\MR{141119}},
   doi={10.2307/1970147},
}
\bib{Bar}{article}{
   author={Baranovsky, Vladimir},
   title={A universal enveloping for $L_\infty$-algebras},
   journal={Math. Res. Lett.},
   volume={15},
   date={2008},
   number={6},
   pages={1073--1089},
   issn={1073-2780},
   review={\MR{2470385}},
   doi={10.4310/MRL.2008.v15.n6.a1},
}
\bib{Bruce}{article}{
   author={Bruce, Andrew James},
   title={From $L_\infty$-algebroids to higher Schouten/Poisson structures},
   journal={Rep. Math. Phys.},
   volume={67},
   date={2011},
   number={2},
   pages={157--177},
   issn={0034-4877},
   review={\MR{2840338}},
   doi={10.1016/S0034-4877(11)00010-3},
}
\bib{Guide}{book}{
   author={McCleary, John},
   title={A user's guide to spectral sequences},
   series={Cambridge Studies in Advanced Mathematics},
   volume={58},
   edition={2},
   publisher={Cambridge University Press, Cambridge},
   date={2001},
   pages={xvi+561},
   isbn={0-521-56759-9},
   review={\MR{1793722}},
}

\bib{VinBen}{article}{
   author={Gélinas, Vincent},
   author={Briggs, Ben},
   title={The A-infinity Centre of the Yoneda Algebra and the Characteristic Action of Hochschild Cohomology on the Derived Category},
   date={2017},
   eprint={arXiv:math/1702.00721 [math.RT]},
   pages={43},
}
\bib{Duflo}{book}{
   author={Calaque, Damien},
   author={Rossi, Carlo A.},
   title={Lectures on Duflo isomorphisms in Lie algebra and complex
   geometry},
   series={EMS Series of Lectures in Mathematics},
   publisher={European Mathematical Society (EMS), Z\"{u}rich},
   date={2011},
   pages={viii+106},
   isbn={978-3-03719-096-8},
   review={\MR{2816610}},
   doi={10.4171/096},
}

\bib{Catt}{article}{
   author={Cattaneo, Alberto S.},
   author={Felder, Giovanni},
   title={Relative formality theorem and quantisation of coisotropic
   submanifolds},
   journal={Adv. Math.},
   volume={208},
   date={2007},
   number={2},
   pages={521--548},
   issn={0001-8708},
   review={\MR{2304327}},
   doi={10.1016/j.aim.2006.03.010},
}

\bib{Dol}{article}{
   author={Dolgushev, Vasiliy},
   title={Erratum to:  “A Proof of Tsygan’s Formality Conjecture for anArbitrary Smooth Manifold”},
   date={2007},
   eprint={arXiv:math/0703113 [math.QA]},
   pages={10},
}
\bib{DotKho}{article}{
   author={Dotsenko, Vladimir},
   author={Khoroshkin, Anton},
   title={Quillen homology for operads via Gr\"obner bases},
   journal={Doc. Math.},
   volume={18},
   date={2013},
   pages={707--747},
   issn={1431-0635},
   review={\MR{3084563}},
}
\bib{DotVal}{article}{
	author={Dotsenko, Vladimir},
	author={Shadrin, Sergei},
	author={Vallette, Bruno},
	title={The twisting procedure},	
	date={2019},
	eprint={arXiv:math/1810.02941 [math.QA]},
	pages={93},
}
\bib{PBW}{article}{
   author={Dotsenko, Vladimir},
   author={Tamaroff, Pedro},   title={Endofunctors and Poincaré-Birkhoff-Witt theorems},
   eprint = {1804.06485 [math.CT]},
   journal = {Int. Math. Res. Not. IMRN},
   date={2019},
   pages={12},
}
\bib{Hinich}{article}{
   author={Hinich, Vladimir},
   title={Homological algebra of homotopy algebras},
   journal={Comm. Algebra},
   volume={25},
   date={1997},
   number={10},
   pages={3291--3323},
   issn={0092-7872},
   review={\MR{1465117}},
   doi={10.1080/00927879708826055},
}
\bib{Voronov}{article}{
   author={Voronov, Theodore},
   title={Higher derived brackets and homotopy algebras},
   journal={J. Pure Appl. Algebra},
   volume={202},
   date={2005},
   number={1-3},
   pages={133--153},
   issn={0022-4049},
   review={\MR{2163405}},
   doi={10.1016/j.jpaa.2005.01.010},
}
\bib{TFH}{book}{
   author={F\'{e}lix, Yves},
   author={Halperin, Stephen},
   author={Thomas, Jean-Claude},
   title={Rational homotopy theory},
   series={Graduate Texts in Mathematics},
   volume={205},
   publisher={Springer-Verlag, New York},
   date={2001},
   pages={xxxiv+535},
   isbn={0-387-95068-0},
   review={\MR{1802847}},
   doi={10.1007/978-1-4613-0105-9},
}
\bib{Voronov2}{article}{
   author={Khudaverdian, H. M.},
   author={Voronov, Th. Th.},
   title={Higher Poisson brackets and differential forms},
   conference={
      title={Geometric methods in physics},
   },
   book={
      series={AIP Conf. Proc.},
      volume={1079},
      publisher={Amer. Inst. Phys., Melville, NY},
   },
   date={2008},
   pages={203--215},
   review={\MR{2757715}},
}
\bib{Knud}{article}{
   author={Knudsen, Ben},
   title={Higher enveloping algebras},
   journal={Geom. Topol.},
   volume={22},
   date={2018},
   number={7},
   pages={4013--4066},
   issn={1465-3060},
   review={\MR{3890770}},
   doi={10.2140/gt.2018.22.4013},
}
\bib{Fresse}{book}{
   author={Fresse, Benoit},
   title={Modules over operads and functors},
   series={Lecture Notes in Mathematics},
   volume={1967},
   publisher={Springer-Verlag, Berlin},
   date={2009},
   pages={x+308},
   isbn={978-3-540-89055-3},
   review={\MR{2494775}},
   doi={10.1007/978-3-540-89056-0},
}
\bib{GK}{article}{
author={Ginzburg, Victor},
author={Kapranov, Mikhail},
title={Koszul duality for operads},
journal={Duke Mathematical Journal},
volume={76},
number={1},
pages={203--272},
year={1994},
publisher={Duke University Press}
}
\bib{Hov}{book}{
   author={Hovey, Mark},
   title={Model categories},
   series={Mathematical Surveys and Monographs},
   volume={63},
   publisher={American Mathematical Society, Providence, RI},
   date={1999},
   pages={xii+209},
   isbn={0-8218-1359-5},
   review={\MR{1650134}},
}

\bib{Khor::UP}{article}{
	author={Khoroshkin, Anton},
	title={PBW property for Universal enveloping algebras over an operad},	
	date={2018},
	eprint={arXiv:math/1807.05873 [math.QA]},
	pages={20},
}

\bib{Lada}{article}{
   author={Lada, Tom},
   author={Markl, Martin},
   title={Strongly homotopy Lie algebras},
   eprint = {hep-th/9406095},
   date={1994},
   pages={17},
}

\bib{Has}{article}{
   author={Lef\`evre-Hasegawa, Kenji},
   title={ Sur les A-infini catégories, \emph{PhD Thesis}},
   date={2003},
   eprint={arXiv:math/0310337 [math.CT]},
   pages={230},
}
\bib{LV}{book}{
   author={Loday, Jean-Louis},
   author={Vallette, Bruno},
   title={Algebraic operads},
   series={Grundlehren der Mathematischen Wissenschaften [Fundamental
   Principles of Mathematical Sciences]},
   volume={346},
   publisher={Springer, Heidelberg},
   date={2012},
   pages={xxiv+634},
   isbn={978-3-642-30361-6},
   review={\MR{2954392}},
   doi={10.1007/978-3-642-30362-3},
}
\bib{Jose}{article}{
   author={Moreno-Fernández, José Manuel},
   title={The Milnor-Moore theorem for L-infinity algebras in rational homotopy theory},
   eprint = {1904.12530 [math.KT]},
   date={2019},
   pages={16},
}

%


\bib{Val}{article}{
   author={Vallette, Bruno},
   title={Homotopy theory of homotopy algebras},
   date={2014},
   pages={32},
   eprint = {arXiv:1411.5533 [math.AT]},
}

\bib{TWW}{article}{
   author={Willwacher, Thomas},
   title={M. Kontsevich's graph complex and the Grothendieck-Teichm\"{u}ller Lie
   algebra},
   journal={Invent. Math.},
   volume={200},
   date={2015},
   number={3},
   pages={671--760},
   issn={0020-9910},
   review={\MR{3348138}},
   doi={10.1007/s00222-014-0528-x},
}

\end{biblist}
\end{bibdiv}
\Addresses

\end{document}